\documentclass[preprint]{elsarticle}%review
 \oddsidemargin 0in \textwidth 6.0in \textheight 8.0in
\usepackage{lineno}
%\usepackage[notcite,notref]{showkeys}
%\modulolinenumbers[5]

%\journal{Comput. Math. Appl.}

%% `Elsevier LaTeX' style
%\bibliographystyle{elsarticle-num}
\usepackage{stmaryrd}
\usepackage{slashbox}
\usepackage{amsmath}
\usepackage{amssymb}
\usepackage{amsthm}
\usepackage{graphicx}
\usepackage{epic,eepic,epsfig}
\usepackage{color}
\usepackage{subfigure}  % add 1.22
\usepackage{placeins}   % add 1.22
\usepackage{multirow}
\usepackage{epstopdf}
\usepackage{varwidth}
\usepackage{float}
\usepackage[table]{xcolor}

\pagestyle{plain}

\newtheorem{theorem}{Theorem}[section]
\newtheorem{example}{Example}[section]
\newtheorem{lemma}{Lemma}[section]

\newtheorem{definition}{Definition}
\newcommand{\Lbrace}{ \left\{\kern -0.23em\left\{     }
 \newcommand{\Rbrace}{ \right\}\kern -0.23em\right\} }
\newcommand{\Lbracket }{ \left[\kern -0.09em\left[    }
 \newcommand{\Rbracket }{ \right] \kern-0.09em\right] }

\setlength{\tabcolsep}{8pt}

\definecolor{tabclr}{cmyk}{0,0,1,0}
%%%%%%%%%%%%%%%%%%%%%%%

\begin{document}

\title{A modified weak Galerkin method for $\boldsymbol{H}(\mathrm{curl})$-elliptic problem}

\author[SCNU]{Ming Tang} %\corref{cof}
\ead{mingtang@m.scnu.edu.cn}

\author[SCNU]{Liuqiang Zhong}
\ead{zhong@scnu.edu.cn}

\author[GU]{Yingying Xie\corref{cor}}
\ead{xieyy@gzhu.edu.cn}

\cortext[cor]{Corresponding author}
\address[SCNU]{School of Mathematical Sciences, South China Normal University, Guangzhou 510631, China}
\address[GU]{School of Mathematics and Information Science, Guangzhou University, Guangzhou 510006,China}

\begin{abstract}
In this paper, we design and analysis a  modified weak Galerkin (MWG) finite element method for $\boldsymbol{H}(\mathrm{curl})-$elliptic problem. We first  introduce a new discrete weak curl operator and the MWG finite element space.  The modified weak Galerkin method does not require the penalty parameter by comparing with traditional DG methods. We prove optimal error estimates in energy norm. At last, we provide the numerical results to confirm these theoretical results.
\end{abstract}

\begin{keyword}
$\boldsymbol{H}(\mathrm{curl})$ elliptic problem, modified weak Galerkin method, error estimation
\MSC[2010] 65N30 \sep 65F10 \sep 65N55
\end{keyword}

\maketitle
%\tableofcontents
%\linenumbers

\section{Introduction}

Let $\Omega\subset \mathbb{R}^2$ be a bounded Lipschitz polyhedron  with boundary $\partial\Omega$. We consider the following $\boldsymbol{H}(\mathrm{curl})$ elliptic problem:
\begin{eqnarray}\label{Equ:1.1}
	\mathbf{curl}\mathrm{curl} \boldsymbol{u} + \boldsymbol{u}=\boldsymbol{f}, \quad \mbox{in}\ \Omega,\\ \label{Equ:1.2}
	\boldsymbol{u}\cdot\boldsymbol{t} =0, \quad \mbox{on}\ \partial \Omega,
\end{eqnarray}
where $\boldsymbol{t}$ is the unit tangent on $\partial\Omega$ oriented counter-clockwisely, $\boldsymbol{u}$ is the electric or magnetic field, $\boldsymbol{f}$ is a given vector field depending on a given external source field.
% and $\omega$ is the given pulsation frequency which is assumed to be a real and positive constant.
We recall that, $\mathrm{curl}\  \boldsymbol{v}={\partial v_2}/{\partial x}-{\partial v_1}/{\partial y}$ for a vector field $\boldsymbol{v}=(v_1, v_2)$, while $\mathbf{curl}\ \phi =({\partial \phi}/{\partial y}, -{\partial \phi}/{\partial x})$ for a scalar function $\phi$.

Weak Galerkin (WG) method was introduced by Wang and Ye \cite{WangJPYeX13:103} for solving a second order elliptic problem. The weak function $v=\{v^0,v^b\}$ was used as the approximate function space, and weak gradient operator was used instead of classical gradient operator. The first component $v^0$ can be understood as the value of $v$ in the interior of elements, and the second component $v^b$ can be understood as the value of $v$ on the boundary of elements.
WG method can introduce stabilization terms in bilinear form to describe the continuity of the weak functions.
Moreover,
%because the weak Galerkin methods inherit the advantages and abandon the weaknesses of discontinuous Galerkin or discontinuous Petrov-Galerkin methods,
WG method can also be applied in arbitrary polygon or polyhedron partitions and has strong flexibility and robustness, whose corresponding bilinear form is independent on parameters \cite{MuLWangJP15:45}. Hence, WG method has been developed to solve many equations, such as the Stokes equations \cite{WangJPYeX16:155, WangRSWangXS16:171, ZhaiZhang15:2455}, the Helmholtz equation \cite{MuWang14:1461, MuLWangJP15:1228, DuYZhangZM17:133}, the biharmonic equation \cite{MuLWangJP14:1003, WangCMWangJP13:2314, MuLWangJP13:247, MuLWangJP14:473, WangCMWangJP15:302}, parabolic equations \cite{LiQLWangJP13:2004, ZhangHQZouYK16:24},  Navier-Stokes equations \cite{ZhangJCZhangK18:706}, Oseen equations \cite{LiuXLiJ16:1473},  Brinkman equation \cite{MuLWangJP14:327}, Darcy-Stokes equation \cite{ChenWBWangF16:897} and Darcy equation \cite{LinGLiuJG14:422}. Recently,
there are also some works with Maxwell equations \cite{MuLWangJP15:363, ShieldsLi17:2106, WangCM18:127}.
For example, Mu, Wang, Ye and Zhang \cite{MuLWangJP15:363} applied the WG method to the time-harmonic Maxwell equations, and gave a numerical method with the optimal order of convergence in certain discrete norms. Shields, Li and Machorro  \cite{ShieldsLi17:2106} developed the WG method to solve the time-dependent Maxwell equations, and proved the optimal order of convergence in the energy norm. Wang \cite{WangCM18:127} introduced a new discrete scheme of time harmonic Maxwell equations in connected domain based on WG method, and established the error estimates of the optimal order of various discrete Sobolev norms.

Modified weak Galerkin(MWG) method was put forward by Wang, Malluwawadu, Gao and McMillan \cite{WangMalluwawadu14:319} for elliptic problem.
The motivation of MWG method is to reduce the number of unknowns, in which $v^b$ is replaced by the average of $v^0$.
Comparing with WG method, MWG method contains less unknowns, while the accuracy stays the same. %However, the WG methods have better local properties than the MWG methods.
Then, MWG method has also found its way to other problems, such as the parabolic problems \cite{GaoFZWangXS14:1},
Sobolev equation \cite{GaoFZWangXS15:307},
Signorini and obstacle problems \cite{ZengYPChenJR17:1459}, Stokes problem \cite{TianTZhaiQL18:268, MuLWangXS15:79}
and poroelasticity problem \cite{WangRSWangXS18:518}.
However, to our best knowledge, there are not any  published  literatures for  the MWG discretization  of $\boldsymbol{H}(\mathrm{curl})$ elliptic problem.
This manuscript  will provide a MWG method for the $\boldsymbol{H}(\mathrm{curl})$ elliptic problem \eqref{Equ:1.1}-\eqref{Equ:1.2}, prove both the corresponding well-posedness and the optimal order of convergence in the energy norm. The optimal order of convergence is verified by numerical experiments.

In this manuscript, we shall follow the state-of-the-art theoretical analysis in \cite{ZhangLin17:381} to present the error estimation for the MWG method of $\boldsymbol{H}(\mathrm{curl})$ elliptic problem.
%explore the MWG method of $\boldsymbol{H}(\mathbf{curl})$ elliptic problem.
We stress
that the extension of the theory to MWG method of $\boldsymbol{H}(\mathrm{curl})$ elliptic problem is not straightforward,  this paper's contributions include: (1) modified weak curl operator instead of modified grad operator is used, here we explore the relationship between the modified weak curl operator and classical curl operator.
(2) lower order term of  the $\boldsymbol{H}(\mathrm{curl})$ elliptic problem,  which is missed in  the second order elliptic problem, should be  considered carefully. To conquer these problem, we establish  some technical equations.

%The following shorthand notation will be used to avoid the repeated constants,

The rest of the article is organized as follows.
In Sect. \ref{sec:2}, we describe the definitions of
weak curl and modified weak curl, the corresponding modified weak Galerkin algorithm for model problem \eqref{Equ:1.1}-\eqref{Equ:1.2} and the corresponding modified weak Galerkin (MWG) algorithm.
In Sect. \ref{sec:3}, we estimate the error in the energy norm for above MWG methods.
In Sect. \ref{sec:5},  we provide numerical experiments to support the theoretical analysis.

%%%%%%%%%%%%% Section 2 %%%%%%%%%%

\section{The modified weak Galerkin method}\label{sec:2}
In this section, we will define a modified weak curl, introduce a modified weak Galerkin methods for \eqref{Equ:1.1}-\eqref{Equ:1.2} and present the corresponding well-posedness.

First, we present some notations.  Let $K$ be any polygonal domain in $\mathbb{R}^2$ with boundary $\partial K$, we denote by $L^2(K)$ the Hilbert space of square integrable functions
fields with inner product $(\cdot, \cdot)_K$ and the corresponding norm $\|\cdot\|_{0;K}^2 := (\cdot, \cdot)_{K}$. And we use the standard definitions Sobolev spaces $H^s(K)$ and the corresponding norms $\|\cdot\|_{s, K}$ with real number $s\geqslant 0$ and $H_0^1(K) = \{v:v\in H^1(K), v|_{\partial K} = 0\}$, where $v|_{\partial K}$ is in the sense of trace.
In particular, when $K=\Omega$, we simplify $\|\cdot\|_{s, K}$ as $\|\cdot\|_{s}$.
We define
$\boldsymbol{L}^2(K) = [L^2(K)]^2$, $\boldsymbol{H}^s(K)=[H^s(K)]^2$,  $\boldsymbol{H}_0^1(K)=[H_0^1(K)]^2$ and
\begin{eqnarray*}
	&\boldsymbol{H}(\mathrm{curl}; K) =\{\boldsymbol{v}: \boldsymbol{v}\in \boldsymbol{L}^2(K),  \mathrm{curl}\  \boldsymbol{v}\in L^2(K)\},
	%	\\
	%	&\boldsymbol{H}(\mathrm{div}; K)=\{\boldsymbol{v}: \boldsymbol{v}\in \boldsymbol{L}^2(K), \mathrm{div}\ \boldsymbol{v}\in L^2(K)\}.
\end{eqnarray*}
Additionally, we define the subspace of $\boldsymbol{H}(\mathrm{curl}; K)$ as follows:
\begin{equation*}
	\boldsymbol{H}_0(\mathrm{curl}; K) =\{\boldsymbol{v}\in \boldsymbol{H}(\mathrm{curl}; K), \boldsymbol{v}\cdot\boldsymbol{t}=0\ \mbox{on} \ \partial K\}.
\end{equation*}

%\section{The discrete scheme}

Given a shape-regular triangulation $\mathcal{T}_h$ for $\Omega$, For $\tau\in\mathcal{T}_h$, we write $h_{\tau} = |\tau|^{1/2} $ to denote
the local mesh size of the element $\tau$ where $|\tau|$ is the Lebesgue measure of $\tau$ and $h=\max_{\tau\in\mathcal{T}_h} h_{\tau}$.

We denotes $\mathcal{E}_h$ the set of all the edges, $\mathcal{E}_h^0$ denotes the set of all the interior edges, $\mathcal{E}_h^{\partial}$ denotes the set of all the boundary edges.
For $\mathcal{T}^{\prime}_h\subseteq \mathcal{T}_h$ and $\mathcal{E}_h^{\prime}\subseteq \mathcal{E}_h$, we introduce the discrete $L^2$ inner product and norm:
\begin{eqnarray*}
	& \displaystyle
	(\boldsymbol{v}, \boldsymbol{w})_{\mathcal{T}^{\prime}_h}
	=\sum\limits_{\tau \in \mathcal{T}^{\prime}_h}(\boldsymbol{v}, \boldsymbol{w})_{\tau}
	=\sum\limits_{\tau\in \mathcal{T}^{\prime}_h} \int_{\tau} \boldsymbol{v}\cdot \boldsymbol{w} \mathrm{d} x, \quad\|\boldsymbol{v}\|_{\mathcal{T}^{\prime}_h}^{2}
	=(\boldsymbol{v}, \boldsymbol{v})_{\mathcal{T}^{\prime}_h},
	\\
	& \displaystyle
	\langle \boldsymbol{v}, \boldsymbol{w} \rangle_{\mathcal{E}_h^{\prime}}
	=\sum\limits_{e \in \mathcal{E}_h^{\prime}} \langle \boldsymbol{v}, \boldsymbol{w} \rangle_{e}
	=\sum\limits_{e \in \mathcal{E}_h^{\prime}} \int_{e} \boldsymbol{v}\cdot \boldsymbol{w}\mathrm{d} s, \quad \|\boldsymbol{v}_h\|_{\mathcal{E}_h^{\prime}}^{2}
	=\langle \boldsymbol{v}, \boldsymbol{v} \rangle_{\mathcal{E}_h^{\prime}},
\end{eqnarray*}
where $\langle \boldsymbol{v}\cdot \boldsymbol{w} \rangle_{e}= \int_{e} \boldsymbol{v}\cdot \boldsymbol{w}\mathrm{d} s$ denotes the $L^2$ inner product on the edge $e$.

For any $e\in\mathcal{E}_h^{0}$, there exist two adjacent elements $\tau_1, \tau_2\in \mathcal{T}_h$ sharing the common edge $e = \partial\tau_1\cap\partial\tau_2$, we denote by $\boldsymbol{t}_1$ and $\boldsymbol{t}_2$ the unit tangential vectors on $e$ for $\tau_1$ and $\tau_2$, respectively. The average and tangential jump for a vector function $\boldsymbol{w}$ are defined as
\begin{eqnarray}\label{vectorinterior}
	 \Lbrace\boldsymbol{w}\Rbrace_e = (\boldsymbol{w}|_{\tau_1} + \boldsymbol{w}|_{\tau_2})/{2}, \
	\Lbracket \boldsymbol{w}\Rbracket_e= \boldsymbol{w}|_{\tau_1}\cdot\boldsymbol{t}_1 + \boldsymbol{w}|_{\tau_2}\cdot\boldsymbol{t}_2;
\end{eqnarray}

For a scalar function $\phi$, its average and tangential jump on $e$ are defined as
\begin{eqnarray}\label{average}
\Lbrace \phi \Rbrace_e = (\phi|_{\tau_1} + \phi|_{\tau_2})/{2},\ \Lbracket \phi\Rbracket_e =\phi|_{\tau_1} \boldsymbol{t}_1 + \phi|_{\tau_2} \boldsymbol{t}_2,
\end{eqnarray}
where $\boldsymbol{w}|_{\tau_i}$ and $\phi|_{\tau_i}$  denote the value of $\boldsymbol{w}$ on $\tau_i$, $i=1,2$.

For any $e\in\mathcal{E}_h^{\partial}$, there is a element $\tau\in\mathcal{T}_h$ such that $e\in\partial\tau \cap\partial\Omega$, we define the average and tangential jump for a vector function $\boldsymbol{w}$ are defined as
\begin{eqnarray}\label{vectorboundary}
	 \Lbrace \boldsymbol{w}\Rbrace_e=\boldsymbol{w}|_{\tau}, \  \Lbracket \boldsymbol{w}\Rbracket_e=\boldsymbol{w}|_\tau\cdot\boldsymbol{t}.
\end{eqnarray}
For a scalar function $\phi$, its average and  tangential jump on $e$ are defined as
\begin{eqnarray} \label{jump}
 \Lbrace \phi \Rbrace_e=\phi|_{\tau}, \
 \Lbracket \phi\Rbracket_e=\phi|_{\tau}\boldsymbol{t},
\end{eqnarray}
where $\boldsymbol{w}|_{\tau}$ and $\phi|_{\tau}$ denote the value of $\boldsymbol{w}$ on $\tau$.

\subsection{The modified discrete weak curl}
%In this paper, we choose $\boldsymbol{v}^{b}|_e=\Lbrace\boldsymbol{v}^{0}\Rbrace_e$,
In order to describe the modified weak curl, we recall the concepts of weak curl and discrete weak curl.
%First, the weak curl and discrete weak curl is reviewed.
A weak function on the element $\tau$ refers to a function $\boldsymbol{v}=\{\boldsymbol{v}^0, \boldsymbol{v}^b\}$ where $\boldsymbol{v}^0 \in \boldsymbol{L}^2(\tau),\boldsymbol{v}^b \cdot\boldsymbol{t} \in L^2(\partial \tau)$.
The first component $\boldsymbol{v}^0$ can be understood as the value of $\boldsymbol{v}$ in the interior of $\tau$, and the second component $\boldsymbol{v}^b$ is the value of $\boldsymbol{v}$ on the boundary of $\tau$. We denote the space of weak functions on $\tau$ as:
\begin{equation*}\label{equ:weakfunction1}
	V(\tau):=\{\boldsymbol{v}=\{\boldsymbol{v}^0, \boldsymbol{v}^b\}: \boldsymbol{v}^0\in  \boldsymbol{L}^2(\tau), \boldsymbol{v}^b\cdot\boldsymbol{t}\in L^2(\partial  \tau)\},
\end{equation*}
where $\boldsymbol{t}$ is the unit tangent on $\partial \tau$ oriented counter-clockwisely.

\begin{definition}[Weak curl]
	For any $\boldsymbol{v}\in V(\tau)$, the weak curl of $\boldsymbol{v}$ is defined as a continuous linear functional
	$\mathrm{curl}^c_{w}\ \boldsymbol{v}\in H^1(\tau)$ whose action on each $\phi\in H^1(\tau)$ is given by
	\begin{equation*}\label{equ:weakcurl}
		(\mathrm{curl}^c _{w}\ \boldsymbol{v}, \phi)_\tau:=(\boldsymbol{v}^0, \mathbf{curl}\ \phi)_\tau + \langle\boldsymbol{v}^b\cdot\boldsymbol{t}, \phi\rangle_{\partial \tau},
	\end{equation*}
	where $(\cdot, \cdot)_\tau$ is the $L^2$ inner product on $\tau$ and $\langle\cdot, \cdot\rangle_{\partial \tau}$ is the $L^2$ inner product on $\partial \tau$.
\end{definition}

For a positive constant $k\geq1$, we denote by $P_{k}(\tau)$ the set of polynomials on $\tau$ with degree no more than $k$.

\begin{definition}[Discrete Weak Curl]\label{def:dweakcurl}
	The discrete weak curl operator, denoted by $\mathrm{curl}^d _{w,k-1, \tau}$, is defined as the unique polynomial
	$\mathrm{curl}^d _{w,k-1, \tau}\ \boldsymbol{v}\in P_{k-1}(\tau)$ for any $\boldsymbol{v} \in V(\tau)$ that satisfies:
	\begin{equation*}\label{equ:dweakcurl}
		(\mathrm{curl}^d _{w,k-1, \tau}\ \boldsymbol{v}, \phi)_\tau:=
		(\boldsymbol{v}^0, \mathbf{curl}\ \phi)_\tau+\langle\boldsymbol{v}^b\cdot\boldsymbol{t}, \phi\rangle_{\partial \tau}, \forall \phi\in P_{k-1}(\tau).
	\end{equation*}
\end{definition}
The definitions of weak curl and discrete weak curl for 3D can be found in \cite{MuLWangJP15:363} or \cite{ShieldsLi17:2106}, respectively.

Similar to the definition of modified weak gradient in \cite{WangMalluwawadu14:319},  for the function $\boldsymbol{v}=\{\boldsymbol{v}^0, \boldsymbol{v}^b\}\in V(\tau)$, the selected component $\boldsymbol{v}^b=\Lbrace \boldsymbol{v}^0\Rbrace_e$ is not an independent component because $\boldsymbol{v}^b=\Lbrace \boldsymbol{v}^0\Rbrace_e$ is determined by $\boldsymbol{v}^0$. Futhermore, we define a discrete modified weak curl as follows.

\begin{definition}[Discrete Modified Weak Curl]\label{def:modifiedweakcurl}
	The discrete modified weak curl operator, denoted by $\mathrm{curl} _{w,k-1, \tau}$, is defined as the unique polynomial
	$\mathrm{curl} _{w,k-1, \tau}\ \boldsymbol{v}\in P_{k-1}(\tau)$ for any $\boldsymbol{v} \in V(\tau)$ that satisfies:
	%For a given $\tau$, a discrete weak curl operator, denoted by $\mathrm{curl} _{w,k-1, \tau}$, is defined as the unique polynomial
	%$\mathrm{curl} _{w,k-1, \tau}\ \boldsymbol{v}\in P_{k-1}(\tau)$  that satisfies the following equation
	\begin{equation}\label{equ:mweakcurl}
		(\mathrm{curl} _{w, k-1, \tau}\ \boldsymbol{v}, \phi)_\tau:=
		(\boldsymbol{v}^0, \mathbf{curl}\ \phi)_\tau+\langle\Lbrace\boldsymbol{v}^{0}\Rbrace_e\cdot\boldsymbol{t}, \phi\rangle_{\partial \tau}, \forall \phi\in P_{k-1}(\tau).
	\end{equation}
\end{definition}

Without confusion, we simply denote $\mathrm{curl} _{w,k-1,\tau}$ as $\mathrm{curl} _{w}$ in the following part.
%It is noted that if $\boldsymbol{v} \in \boldsymbol{H}(\mathrm{curl}; \tau)$ we have that $\mathrm{curl} _w\ \boldsymbol{v} = \mathrm{curl}\  \boldsymbol{v}$.

\subsection{The modified weak Galerkin method for model problem}

Next, for $k\geqslant 1$, we define MWG finite element spaces as follows:
% For $l\geqslant 1$, we define WG finite element spaces as follows:
%\begin{eqnarray*}
%\mathcal{V}(\mathcal{T})&=&\left\{ v :\left.v\right|_{\tau} \in P_{l}(\tau), \forall \tau \in \mathcal{T}\right\} , \\
%\mathcal{V}^{0}(\mathcal{T})&=&\left\{ v \in \mathcal{V}(\mathcal{T}):\left.v\right|_{e}=0, \forall e \in \partial \Omega\right\}.
%\end{eqnarray*}
\begin{eqnarray*}
	V_h:=\left\{\boldsymbol{v}_h
	=
	\left\{\boldsymbol{v}^{0}_h, \boldsymbol{v}^{b}_h\right\} :\left.\boldsymbol{v}_h^{0}\right|_{\tau} \in (P_{k}(\tau))^2,\left.\boldsymbol{v}^{b}_h\right|_{e}
	=\Lbrace\boldsymbol{v}^{0}_h\Rbrace_e, e\subset \partial \tau, \tau \in \mathcal{T}_h\right\},
\end{eqnarray*}
and
\begin{eqnarray*}
	V^0_h:=\left\{\boldsymbol{v}_h=\left\{\boldsymbol{v}^{0}_h, \boldsymbol{v}^{b}_h\right\} :\boldsymbol{v}_h \in V_{h}, \left. \boldsymbol{v}^{b}_h\right|_{e}\cdot\boldsymbol{t}=0, e\in \partial \Omega \right\}.
\end{eqnarray*}

The modified weak Galerkin method for \eqref{Equ:1.1} and \eqref{Equ:1.2} can be obtained by finding $\boldsymbol{u}_h=\left\{\boldsymbol{u}_{h}^0, \Lbrace\boldsymbol{u}_{h}^0\Rbrace_e\right\}\in V^0_h$ which  satisfies
\begin{equation}\label{Eq:MWG}
	a(\boldsymbol{u}_h, \boldsymbol{v}_h)
	=
	(\boldsymbol{f}, \boldsymbol{v}^0_h)_{\mathcal{T}_h}, \quad \forall \boldsymbol{v}_h=\left\{\boldsymbol{v}_h^0, \Lbrace\boldsymbol{v}_h^0\Rbrace_e\right\} \in V^0_h,
\end{equation}
where
\begin{equation*}
	a(\boldsymbol{u}_h, \boldsymbol{v}_h)
	= (\mathrm{curl}_w\ \boldsymbol{u}_h, \mathrm{curl}_w\ \boldsymbol{v}_h)_{\mathcal{T}_h} +(\boldsymbol{u}_h^0, \boldsymbol{v}_h^0)_{\mathcal{T}_h} + s(\boldsymbol{u}_h, \boldsymbol{v}_h),
	%a(u_h, u_h)=\interleaveu_h\interleave_h^2,
\end{equation*}
and
\begin{equation}\label{Eqn:2-2-3-b}
	s(\boldsymbol{u}_h,  \boldsymbol{v}_h)=
	\sum\limits_{e\in \mathcal{E}_h}h^{-1}_e \left\langle\Lbracket\boldsymbol{u}_h^0\Rbracket_e, \Lbracket\boldsymbol{v}_h^0\Rbracket_e\right\rangle_e,
\end{equation}
here, we denote $h_{e} = |e|$ the local mesh size of the edge $e$.

\begin{lemma}
	The modified weak Galerkin methods defined in \eqref{Eq:MWG} has a unique solution.
\end{lemma}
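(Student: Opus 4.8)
The plan is to use the fact that \eqref{Eq:MWG} is a square linear system posed on the finite-dimensional space $V^0_h$ (the test and trial spaces coincide, and every $\boldsymbol{v}_h\in V^0_h$ is completely determined by its polynomial interior component $\boldsymbol{v}_h^0\in\prod_{\tau}(P_k(\tau))^2$). Hence existence of a solution for every right-hand side is equivalent to uniqueness, and it suffices to show that $\boldsymbol{f}=0$ forces $\boldsymbol{u}_h=0$. Before doing so I would record that the scheme is meaningful: for each $\tau$ the right-hand side of \eqref{equ:mweakcurl} is a linear functional of $\phi\in P_{k-1}(\tau)$, so by nondegeneracy of the $L^2(\tau)$ inner product on $P_{k-1}(\tau)$ there is a unique $\mathrm{curl}_w\boldsymbol{v}\in P_{k-1}(\tau)$ representing it; consequently $a(\cdot,\cdot)$ is a well-defined, symmetric bilinear form on $V^0_h\times V^0_h$.

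Now take $\boldsymbol{f}=0$ in \eqref{Eq:MWG} and choose the test function $\boldsymbol{v}_h=\boldsymbol{u}_h$. This yields
\begin{equation*}
 a(\boldsymbol{u}_h,\boldsymbol{u}_h)=(\mathrm{curl}_w\,\boldsymbol{u}_h,\mathrm{curl}_w\,\boldsymbol{u}_h)_{\mathcal{T}_h}+\|\boldsymbol{u}_h^0\|_{\mathcal{T}_h}^2+s(\boldsymbol{u}_h,\boldsymbol{u}_h)=0 .
\end{equation*}
All three terms on the left are nonnegative: the first two are squared $L^2$ quantities, and by \eqref{Eqn:2-2-3-b} we have $s(\boldsymbol{u}_h,\boldsymbol{u}_h)=\sum_{e\in\mathcal{E}_h}h_e^{-1}\langle\Lbracket\boldsymbol{u}_h^0\Rbracket_e,\Lbracket\boldsymbol{u}_h^0\Rbracket_e\rangle_e\ge 0$. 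Therefore each term vanishes; in particular $\|\boldsymbol{u}_h^0\|_{\mathcal{T}_h}^2=0$, so $\boldsymbol{u}_h^0\equiv 0$ on every $\tau\in\mathcal{T}_h$. By the definition of $V_h$ the boundary component then satisfies $\boldsymbol{u}_h^b|_e=\Lbrace\boldsymbol{u}_h^0\Rbrace_e=0$ on every edge, so $\boldsymbol{u}_h=\{\boldsymbol{0},\boldsymbol{0}\}$.

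Combining the two steps, the homogeneous system has only the trivial solution, hence the square system \eqref{Eq:MWG} is invertible and has a unique solution for every $\boldsymbol{f}$. I do not anticipate a genuine obstacle in this lemma; the only point worth emphasizing is structural rather than technical: unlike the situation for a pure $\boldsymbol{H}(\mathrm{curl})$ operator, the presence of the zeroth-order term $(\boldsymbol{u}_h^0,\boldsymbol{v}_h^0)_{\mathcal{T}_h}$ in $a(\cdot,\cdot)$ makes the form coercive on $V^0_h$ with respect to the energy norm $a(\boldsymbol{v}_h,\boldsymbol{v}_h)^{1/2}$ immediately, so no discrete Poincaré-type inequality or norm equivalence is needed here — those tools will instead be deployed in the error analysis of Section~\ref{sec:3}.
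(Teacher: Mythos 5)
Your proposal is correct and follows essentially the same route as the paper: reduce existence to uniqueness for the square linear system, test the homogeneous equation with $\boldsymbol{v}_h=\boldsymbol{u}_h$, observe that all three nonnegative terms of $a(\boldsymbol{u}_h,\boldsymbol{u}_h)$ must vanish, and conclude $\boldsymbol{u}_h^0=0$ (hence $\boldsymbol{u}_h^b=\Lbrace\boldsymbol{u}_h^0\Rbrace_e=0$). Your added remarks on the well-definedness of $\mathrm{curl}_w$ and the role of the zeroth-order term are sound but not needed beyond what the paper records.
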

\begin{proof}
	Since \eqref{Eq:MWG} is essentially a linear system of equations, we only need to prove the
	uniqueness.  Let $\boldsymbol{f} = 0$. Taking $\boldsymbol{v}_h = \boldsymbol{u}_h$ in \eqref{Eq:MWG}, we obtain
	\begin{equation*}
		(\mathrm{curl}_w\ \boldsymbol{u}_h, \mathrm{curl}_w\ \boldsymbol{u}_h)_{\mathcal{T}_h}
		+(\boldsymbol{u}_h^0, \boldsymbol{u}_h^0)_{\mathcal{T}_h}
		+ \sum\limits_{e\in \mathcal{E}_h}h^{-1}_e\left\langle\Lbracket\boldsymbol{u}_h^0\Rbracket_e, \Lbracket\boldsymbol{u}_h^0\Rbracket_e\right\rangle_e
		=0,
	\end{equation*}
	then we get $\boldsymbol{u}_h^0|_\tau=0, \forall\tau\in \mathcal{T}_h$ and $\Lbracket\boldsymbol{u}_h^0\Rbracket_e=0, \forall e\in \mathcal{E}_h$.
	%\begin{eqnarray*}
	%0
	%&=&\langle\boldsymbol{t}\cdot u_1\boldsymbol{t}, u_1\rangle_{\partial\tau}\\
	%&=&\langle u_1, u_1\rangle_{\partial\tau},
	%\end{eqnarray*}
	As a result, $\boldsymbol{u}_h=\left\{\boldsymbol{u}_{h}^0, \Lbrace\boldsymbol{u}_{h}^0\Rbrace_e\right\}=\boldsymbol{0}$.
\end{proof}

%%%%%
%%%%%

\section{A priori error bound}\label{sec:3}
In this section, we will introduce some projections and the corresponding properties, and prove the priori error bound in the energy norm by introducing some technical equations.

\subsection{Some projections and the corresponding approximation}

In order to prove the error estimate, we introduce three projections $P_h, Q_{h}$ and $G_h$ defined as following.
%First,
%For $\tau \in \mathcal{T}_h$,  let the projection $P_h: \boldsymbol{L}^2(\Omega)\to \left(P_{k}(\tau)\right)^2$ satisfy
%\begin{equation*}\label{Eq:P_h}
%\left(\boldsymbol{v}-P_{h} \boldsymbol{v}, \boldsymbol{q}\right)_{\tau}=0,  \quad \forall \boldsymbol{q} \in \left(P_{k}(\tau)\right)^2.
%\end{equation*}
For each element $\tau\in\mathcal{T}_h$ denote by $P_h$ the $L^2$ projections onto $[P_k(\tau)]^2$. The projection $Q_{h}$ is the $L^2$ projection onto $V^0_h$ such that
\begin{equation}\label{Eq:Q_h}
	\left. Q_{h} \boldsymbol{v}\right|_\tau=\left\{P_{h} \boldsymbol{v}, \Lbrace P_{h} \boldsymbol{v}\Rbrace_e\right\}.
\end{equation}
In addition, denote by $G_h$ the $L^2$ projection onto $P_{k-1}(\tau)$.
%\begin{eqnarray}\nonumber
%(\mathrm{curl}_w Q_h\boldsymbol{v}, \phi)_\tau
%&=&
%(Q_h^0\boldsymbol{v}, \mathbf{curl}\phi)_\tau
%+
%\langle\boldsymbol{t}\cdot Q_h^b\boldsymbol{v}, \phi\rangle_{\partial\tau}\\\nonumber
%&=&(\boldsymbol{v}, \mathrm{curl} \phi)_\tau
%+
%\langle\boldsymbol{t}\cdot \boldsymbol{v}, \phi\rangle_{\partial\tau}, \forall \phi\in P_k(\tau)\\\nonumber
%&=& (\mathrm{curl}\boldsymbol{v}, \phi)_\tau\\\label{equ:Q-R}
%&=& (G_h(\mathrm{curl} \boldsymbol{v}), \phi)_\tau,
%\end{eqnarray}
Next we will present the properties of the projections $P_{h}$, $Q_h$ and $G_h$.

\begin{lemma}[\cite{MuLWangJP15:363}, Lemma 6.1]\label{lem:P_h-Q_h}
	Given a shape-regular triangulation $\mathcal{T}_h$ for $\Omega$, $\boldsymbol{v}\in \boldsymbol{H}^{t+1}(\Omega)$, $0\leqslant t \leqslant k$. Then, for $0\leqslant s \leqslant 1$, we have
	\footnote{Throughout the paper,  we will use $x\lesssim y$ means $x \leq Cy$ where $C$ are generic positive constants independent of the variables that appear in the inequalities and especially the mesh parameters.}
	\begin{eqnarray*}\label{Eqn:2.4.2}
		&
		\sum\limits_{\tau\in \mathcal{T}_{h}}h^{2s}_\tau\left\|\boldsymbol{v}-P_{h}\boldsymbol{v}\right\|^{2}_{s, \tau}\lesssim h^{2(t+1)}\left\|\boldsymbol{v}\right\|^{2}_{t+1},  \\  \label{Eqn:2-6-d}
		&
		\sum\limits_{\tau\in \mathcal{T}_{h}}h^{2s}_{\tau}\left\|\mathrm{curl} \ \boldsymbol{v}-G_h(\mathrm{curl}\ \boldsymbol{v}) \right\|^{2}_{s, \tau} \lesssim h^{2t}\left\|\boldsymbol{v}\right\|^{2}_{t+1}.
	\end{eqnarray*}
\end{lemma}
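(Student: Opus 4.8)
The statement to prove is Lemma~\ref{lem:P_h-Q_h}, a pair of standard projection approximation estimates. Since the paper cites this as \cite{MuLWangJP15:363}, Lemma~6.1, the proof is expected to be short and to reduce everything to the Bramble--Hilbert lemma together with the standard scaling argument on a shape-regular triangulation. I outline the plan below.

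\textbf{Plan.} The plan is to prove each estimate separately by the usual reference-element/scaling technique. For the first bound, fix $\tau\in\mathcal{T}_h$ and recall that $P_h$ restricted to $\tau$ is the componentwise $L^2$-orthogonal projection onto $[P_k(\tau)]^2$, hence it reproduces polynomials of degree $\le k$ and is $L^2$-stable. Mapping $\tau$ to a fixed reference triangle $\hat\tau$ by an affine diffeomorphism, I would invoke the Bramble--Hilbert lemma on $\hat\tau$ to get $\|\hat{\boldsymbol v}-\hat P\hat{\boldsymbol v}\|_{s,\hat\tau}\lesssim |\hat{\boldsymbol v}|_{t+1,\hat\tau}$ for $0\le s\le t+1$ (here $s\le 1\le t+1$ since $t\ge 0$; the case $t=0$ needs $s=0$ only, which still works), then scale back to $\tau$. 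Shape-regularity gives the scaling factors $h_\tau^{-s}$ on the left seminorm and $h_\tau^{t+1}$ on the right, so that $h_\tau^{s}\|\boldsymbol v-P_h\boldsymbol v\|_{s,\tau}\lesssim h_\tau^{t+1}|\boldsymbol v|_{t+1,\tau}$. Squaring, summing over $\tau\in\mathcal{T}_h$, bounding each $h_\tau\le h$, and using $\sum_\tau |\boldsymbol v|_{t+1,\tau}^2=|\boldsymbol v|_{t+1,\Omega}^2\le\|\boldsymbol v\|_{t+1}^2$ yields the first inequality.

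\textbf{Second estimate.} For the curl bound, the key observation is that $\mathrm{curl}\,\boldsymbol v\in H^t(\tau)$ whenever $\boldsymbol v\in\boldsymbol H^{t+1}(\tau)$, with $\|\mathrm{curl}\,\boldsymbol v\|_{t,\tau}\lesssim\|\boldsymbol v\|_{t+1,\tau}$, since $\mathrm{curl}$ is a first-order differential operator. Then $G_h$ is the scalar $L^2$-projection onto $P_{k-1}(\tau)$, which reproduces polynomials of degree $\le k-1$; applying Bramble--Hilbert to $w:=\mathrm{curl}\,\boldsymbol v$ on the reference element with polynomial degree $k-1$ gives, for $0\le s\le t$ (again $s\le 1\le t$ when $t\ge1$; when $t=0$ only $s=0$ is used, and $G_h$ projecting onto constants still gives an $O(1)$ bound, consistent with the $h^{2t}=h^0$ on the right), $\|w-G_h w\|_{s,\tau}\lesssim h_\tau^{t-s}|w|_{t,\tau}\lesssim h_\tau^{t-s}\|\boldsymbol v\|_{t+1,\tau}$ after the scaling argument. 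Multiplying by $h_\tau^{s}$, squaring, summing, and again bounding $h_\tau\le h$ and $\sum_\tau\|\boldsymbol v\|_{t+1,\tau}^2=\|\boldsymbol v\|_{t+1}^2$ gives the second inequality.

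\textbf{Main obstacle.} None of the steps is deep; the only points requiring a little care are the endpoint exponents. For the second estimate the right-hand side carries $h^{2t}$ rather than $h^{2(t+1)}$, which is exactly what one expects because $G_h$ only reproduces degree $k-1$ and acts on the first derivative $\mathrm{curl}\,\boldsymbol v$ — so one loses one power of $h$ relative to the first estimate, and one must make sure the Bramble--Hilbert exponent bookkeeping reflects this (projection order $k-1$ against regularity index $t$ of $w$, giving rate $t-s$ in $h_\tau$, hence $h_\tau^{t}$ after the extra $h_\tau^{s}$ factor). The subtle boundary case is $t=0$: then $G_h$ only reproduces constants and the claimed bound $\sum_\tau\|\mathrm{curl}\,\boldsymbol v-G_h(\mathrm{curl}\,\boldsymbol v)\|_{0,\tau}^2\lesssim\|\boldsymbol v\|_1^2$ follows merely from $L^2$-stability of $G_h$ (no negative power of $h$ appears, and none is needed), so the argument degrades gracefully. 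Otherwise the proof is entirely routine, and I would simply cite the affine-equivalence and Bramble--Hilbert machinery rather than reproduce it in detail.
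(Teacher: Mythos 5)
Your argument is correct and is exactly the standard route: the paper itself offers no proof of this lemma, merely citing it as Lemma~6.1 of the Mu--Wang--Ye--Zhang reference, and that source establishes the bounds by precisely the affine scaling plus Bramble--Hilbert machinery you describe (projection order $k$ against regularity $t+1$ for $P_h$, order $k-1$ against regularity $t$ for $G_h$ acting on $\mathrm{curl}\,\boldsymbol v$, with the resulting one-power loss of $h$). Your attention to the endpoint bookkeeping is appropriate; the only remaining caveat is one inherited from the statement itself rather than from your proof, namely that the second estimate with $s=1$ tacitly presupposes $t\geqslant 1$ so that $\mathrm{curl}\,\boldsymbol v\in H^1(\tau)$, which is how the estimate is actually invoked later in the paper.
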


\begin{lemma}\label{Lem:Q_h}
	Let $\boldsymbol{v}\in %\boldsymbol{H}_0(\bold{curl}; \Omega)\bigcap
	\boldsymbol{H}^{t+1}(\Omega),~0\leqslant t\leqslant k$, then
	\begin{equation*}\label{Eq:weakcurlofQ_h}
		\left\|\mathrm{curl}_w\ Q_{h} \boldsymbol{v}-\mathrm{curl}\  \boldsymbol{v}\right\|_{\mathcal{T}_h} \lesssim h^{t} \left\|\boldsymbol{v}\right\|_{t+1}.
	\end{equation*}
\end{lemma}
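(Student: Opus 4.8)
The plan is to establish the estimate by comparing $\mathrm{curl}_w\, Q_h \boldsymbol{v}$ with the local $L^2$ projection $G_h(\mathrm{curl}\,\boldsymbol{v})$, and then invoking the second bound of Lemma~\ref{lem:P_h-Q_h} with $s=0$. First I would unwind the definition of the discrete modified weak curl in \eqref{equ:mweakcurl}: for any $\phi\in P_{k-1}(\tau)$,
\[
(\mathrm{curl}_w\, Q_h\boldsymbol{v}, \phi)_\tau = (P_h\boldsymbol{v}, \mathbf{curl}\,\phi)_\tau + \langle \Lbrace P_h\boldsymbol{v}\Rbrace_e\cdot\boldsymbol{t}, \phi\rangle_{\partial\tau}.
\]
Since $\mathbf{curl}\,\phi\in [P_{k-1}(\tau)]^2$, the definition of the $L^2$ projection $P_h$ gives $(P_h\boldsymbol{v}, \mathbf{curl}\,\phi)_\tau = (\boldsymbol{v}, \mathbf{curl}\,\phi)_\tau$. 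Then integration by parts (the Green's formula for the $\mathrm{curl}$/$\mathbf{curl}$ pair on $\tau$) yields $(\boldsymbol{v}, \mathbf{curl}\,\phi)_\tau = (\mathrm{curl}\,\boldsymbol{v}, \phi)_\tau + \langle \boldsymbol{v}\cdot\boldsymbol{t}, \phi\rangle_{\partial\tau}$ — here care with the sign convention of $\mathbf{curl}$ and the orientation of $\boldsymbol{t}$ is needed. Combining, and using that $(\mathrm{curl}\,\boldsymbol{v}, \phi)_\tau = (G_h\mathrm{curl}\,\boldsymbol{v}, \phi)_\tau$ for $\phi\in P_{k-1}(\tau)$, we obtain
\[
(\mathrm{curl}_w\, Q_h\boldsymbol{v} - G_h\mathrm{curl}\,\boldsymbol{v}, \phi)_\tau = \langle (\Lbrace P_h\boldsymbol{v}\Rbrace_e - \boldsymbol{v})\cdot\boldsymbol{t}, \phi\rangle_{\partial\tau}, \quad \forall \phi\in P_{k-1}(\tau).
\]

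Next I would choose $\phi = \mathrm{curl}_w\, Q_h\boldsymbol{v} - G_h\mathrm{curl}\,\boldsymbol{v}$, which lies in $P_{k-1}(\tau)$, so the left side becomes $\|\mathrm{curl}_w\, Q_h\boldsymbol{v} - G_h\mathrm{curl}\,\boldsymbol{v}\|_{0;\tau}^2$. To bound the boundary term on the right, I would use the trace inequality $\|\psi\|_{0;\partial\tau}\lesssim h_\tau^{-1/2}\|\psi\|_{0;\tau} + h_\tau^{1/2}\|\nabla\psi\|_{0;\tau}$ applied to $\boldsymbol{v} - P_h\boldsymbol{v}$, together with the inverse inequality $\|\phi\|_{0;\partial\tau}\lesssim h_\tau^{-1/2}\|\phi\|_{0;\tau}$ valid for polynomials $\phi\in P_{k-1}(\tau)$. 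One also needs that $\Lbrace P_h\boldsymbol{v}\Rbrace_e - \boldsymbol{v}$ on an edge $e$ is controlled by the projection errors of $P_h\boldsymbol{v}$ on the two elements meeting at $e$ (since $\boldsymbol{v}$ is single-valued, $\Lbrace P_h\boldsymbol{v}\Rbrace_e - \boldsymbol{v} = \tfrac12((P_h\boldsymbol{v} - \boldsymbol{v})|_{\tau_1} + (P_h\boldsymbol{v}-\boldsymbol{v})|_{\tau_2})$). After Cauchy--Schwarz and cancelling one factor of $\|\phi\|_{0;\tau}$, this gives
\[
\|\mathrm{curl}_w\, Q_h\boldsymbol{v} - G_h\mathrm{curl}\,\boldsymbol{v}\|_{0;\tau} \lesssim h_\tau^{-1/2}\Big(\|\boldsymbol{v}-P_h\boldsymbol{v}\|_{0;\partial\tau} + \text{neighbor terms}\Big),
\]
and then the trace bound converts the right side into $h_\tau^{-1}\|\boldsymbol{v}-P_h\boldsymbol{v}\|_{0;\tau} + \|\boldsymbol{v}-P_h\boldsymbol{v}\|_{1;\tau}$ type quantities over $\tau$ and its neighbors.

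Finally, squaring, summing over all $\tau\in\mathcal{T}_h$, and using the first estimate of Lemma~\ref{lem:P_h-Q_h} with $s=0$ and $s=1$ gives $\sum_\tau \|\mathrm{curl}_w\, Q_h\boldsymbol{v} - G_h\mathrm{curl}\,\boldsymbol{v}\|_{0;\tau}^2 \lesssim h^{2t}\|\boldsymbol{v}\|_{t+1}^2$ (the shape-regularity controls the bounded overlap of neighbor contributions). Combining this with the triangle inequality and the second estimate of Lemma~\ref{lem:P_h-Q_h},
\[
\|\mathrm{curl}_w\, Q_h\boldsymbol{v} - \mathrm{curl}\,\boldsymbol{v}\|_{\mathcal{T}_h} \le \|\mathrm{curl}_w\, Q_h\boldsymbol{v} - G_h\mathrm{curl}\,\boldsymbol{v}\|_{\mathcal{T}_h} + \|G_h\mathrm{curl}\,\boldsymbol{v} - \mathrm{curl}\,\boldsymbol{v}\|_{\mathcal{T}_h} \lesssim h^t\|\boldsymbol{v}\|_{t+1},
\]
which is the claim. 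The main obstacle I anticipate is the bookkeeping around the boundary/edge terms: getting the right power of $h_e$ out of the combined trace and inverse inequalities, handling the averaging $\Lbrace\cdot\Rbrace_e$ so that each edge contributes projection errors from both adjacent elements without double-counting, and keeping the orientation/sign conventions of $\boldsymbol{t}$ consistent in the elementwise integration by parts. Everything else is a routine application of the approximation properties already recorded in Lemma~\ref{lem:P_h-Q_h}.
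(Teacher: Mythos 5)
Your proposal is correct and follows essentially the same route as the paper: identify the elementwise identity $(\mathrm{curl}_w Q_h\boldsymbol{v}-G_h(\mathrm{curl}\,\boldsymbol{v}),q)_\tau=\langle(\Lbrace P_h\boldsymbol{v}\Rbrace_e-\boldsymbol{v})\cdot\boldsymbol{t},q\rangle_{\partial\tau}$ via the definitions of $\mathrm{curl}_w$, $P_h$, $G_h$ and Green's formula, take $q$ to be the difference itself, control the boundary term by trace and inverse inequalities together with Lemma~\ref{lem:P_h-Q_h}, and finish with the triangle inequality against $G_h(\mathrm{curl}\,\boldsymbol{v})$. Your extra bookkeeping on the average $\Lbrace P_h\boldsymbol{v}\Rbrace_e-\boldsymbol{v}$ across the two elements sharing an edge is a detail the paper passes over silently, but it does not change the argument.
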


\begin{proof}
	Let $\tau\in \mathcal{T}_h$, then $\left. Q_{h} \boldsymbol{v}\right|_\tau=\left\{P_{h} \boldsymbol{v}, \Lbrace P_{h} \boldsymbol{v}\Rbrace_e\right\}$. For any $q\in P_{k-1}(\tau)$, using the Definition \ref{def:modifiedweakcurl}, \eqref{Eq:Q_h}, the definition of $P_h$ and Green Formula, we have
	\begin{eqnarray*}
		\lefteqn{\left(\mathrm{curl} _w\ Q_{h} \boldsymbol{v}, q\right)_\tau } \\
		&=&
		\left(P_{h} \boldsymbol{v}, \mathbf{curl}\ q\right)_\tau+\left\langle \Lbrace P_{h} \boldsymbol{v}\Rbrace_e\cdot\boldsymbol{t}, q \right\rangle_{\partial \tau}
		%		%%
		%		{\quad\color{blue}(\mbox{Definition \ref{def:modifiedweakcurl}})}
		%		%%
		\\
		&=&
		\left(\boldsymbol{v}, \mathbf{curl}\ q\right)_\tau+\left\langle \Lbrace P_{h} \boldsymbol{v}\Rbrace_e\cdot\boldsymbol{t}, q\right\rangle_{\partial \tau}
		%		%%%
		%		{\quad\color{blue}(\mbox{投影算子 $P_h$ 的定义})}
		%		%%%
		\\
		&=&
		\left(\boldsymbol{v}, \mathbf{curl}\ q \right)_\tau
		+\left\langle\boldsymbol{v}\cdot\boldsymbol{t}, q\right\rangle_{\partial \tau}
		-\left\langle\boldsymbol{v}\cdot\boldsymbol{t}, q \right\rangle_{\partial \tau}
		+\left\langle\Lbrace P_{h} \boldsymbol{v}\Rbrace_e\cdot\boldsymbol{t} , q\right\rangle_{\partial \tau}
		%		%%%
		%		{\color{blue}(\mbox{加一项减一项})}
		%		%%%
		\\
		&=&
		\left(\mathrm{curl}\ \boldsymbol{v}, q\right)_\tau
		+ \left\langle\left(\Lbrace P_{h} \boldsymbol{v}\Rbrace_e - \boldsymbol{v}\right)\cdot\boldsymbol{t}, q\right\rangle_{\partial \tau},
		%		%%%
		%		{\quad\color{blue}(\mbox{利用格林公式})}
		%		%%%
	\end{eqnarray*}
	which implies
	\begin{equation}\label{Eqn:Q_h-1}
		\left(\mathrm{curl} _w\ Q_{h} \boldsymbol{v}, q\right)_\tau-\left(\mathrm{curl}\ \boldsymbol{v}, q \right)_\tau
		=\left\langle\left(\Lbrace P_{h} \boldsymbol{v}\Rbrace_e - \boldsymbol{v}\right)\cdot\boldsymbol{t}, q\right\rangle _{\partial \tau}.
	\end{equation}
	
	Applying the definition of $G_{h}$, \eqref{Eqn:Q_h-1}, Cauchy-Schwarz inequality, trace inequality and Lemma \ref{lem:P_h-Q_h}, we have
	\begin{eqnarray}
		\nonumber
		\lefteqn{\left(\mathrm{curl} _w\ Q_{h} \boldsymbol{v}-G_h\left(\mathrm{curl} \  \boldsymbol{v}\right), q \right)_{\mathcal{T}_h}}
		\\ \nonumber
		&=&
		\left(\mathrm{curl} _w\ Q_{h} \boldsymbol{v}-\mathrm{curl}\ \boldsymbol{v}, q\right)_{\mathcal{T}_h}
		%		%%%
		%		{\quad\color{blue}(\mbox{利用投影算子 $G_{h}$ 的定义})}
		%		%%%
		\\ \nonumber
		&=&
		\sum\limits_{\tau\in \mathcal{T}_h}\int_{\partial \tau} (\left(\boldsymbol{v}-\Lbrace P_{h} \boldsymbol{v}\Rbrace_e \right)\cdot\boldsymbol{t})\cdot q\mathrm{d}s
		%		%%%
		%		{\quad\color{blue}(\mbox{利用式\eqref{Eqn:Q_h-1}})}
		%		%%%
		%\\ \nonumber
		%&=&
		%\sum\limits_{\tau\in \mathcal{T}_h}\int_{\partial \tau} \left( \boldsymbol{u}- \Lbrace P_{h}\boldsymbol{u}\Rbrace_e \right)  \cdot \left(\boldsymbol{n}\times \boldsymbol{q}\right)ds
		\\  \nonumber
		&\leqslant&
		\sum\limits_{\tau\in \mathcal{T}_h} \left\| \Lbrace P_{h} \boldsymbol{v}\Rbrace_e - \boldsymbol{v}\right\|_{0, \partial \tau} \left\| q\right\|_{0, \partial \tau}
		%		%%%
		%		{\quad\color{blue}(\mbox{利用柯西不等式})}
		%		%%%
		\\ \nonumber
		&\lesssim&
		\sum\limits_{\tau\in \mathcal{T}_h}h^{-\frac{1}{2}}_\tau\left(\left\| P_{h}\boldsymbol{v}- \boldsymbol{v}\right\|_{0, \tau}
		+ h_\tau\left\|\nabla\left(P_{h}\boldsymbol{v}- \boldsymbol{v}\right)\right\|_{0, \tau}\right)h^{-\frac{1}{2}}_\tau \left\| q\right\|_{0, \tau}    \\  \nonumber
		%		%%%
		%		%
		%		&&{\quad\color{blue}(\mbox{利用迹不等式})}\\  \nonumber
		%		%%%
		%		&\lesssim&
		%		\sum\limits_{\tau\in \mathcal{T}_h} h^{-1}_\tau\left(\left\| \boldsymbol{v}- P_{h}\boldsymbol{v}\right\|_{0, \tau}
		%		+ h_\tau\left\| \nabla\left(\boldsymbol{v}- P_{h}\boldsymbol{v}\right)\right\|_{0, \tau}\right) \left\| q\right\|_{0, \tau}    \\   \nonumber
		%		&\lesssim&
		%		h^{-1} h^{t+1}\left\|\boldsymbol{v}\right\|_{t+1} \left\| q\right\|_{\mathcal{T}_h}
		%		%%%
		%		{\quad\color{blue}(\mbox{Lemma \ref{lem:P_h-Q_h} 中分别取 $s=0, 1$ })}
		%		%%%
		\\ \label{Eq:Q_h-2}
		&\lesssim&
		h^{t}\left\|\boldsymbol{v}\right\|_{t+1} \left\| q\right\|_{\mathcal{T}_h}.
	\end{eqnarray}
	Let $q=\mathrm{curl} _w\ Q_{h} \boldsymbol{v}-G_h\left(\mathrm{curl}\ \boldsymbol{v}\right)$ in \eqref{Eq:Q_h-2},  we arrive at
	\begin{equation*} \label{Eq:Q_h-3}
		\left\|\mathrm{curl} _w\ Q_{h} \boldsymbol{v}-G_h\left(\mathrm{curl}\  \boldsymbol{v}\right)\right\|_{\mathcal{T}_h}
		\lesssim h^{t}\left\|\boldsymbol{v}\right\|_{t+1}.
	\end{equation*}
	
	Using triangle inequality, the above inequality and Lemma \ref{lem:P_h-Q_h},  we get
	\begin{eqnarray*}
		\lefteqn{\left\|\mathrm{curl} _w\ Q_{h} \boldsymbol{v}-\mathrm{curl}\  \boldsymbol{v}\right\|_{\mathcal{T}_h} } \\
		&=&
		\left\|\mathrm{curl} _w\ Q_{h} \boldsymbol{v}-G_h\left(\mathrm{curl}\  \boldsymbol{v}\right)+G_h\left(\mathrm{curl}\  \boldsymbol{v}\right)-\mathrm{curl}\  \boldsymbol{v}\right\|_{\mathcal{T}_h}
		%		%%%
		%		{\quad\color{blue}(\mbox{加一项减一项})}
		%		%%%
		\\
		&\leqslant &
		\left\|\mathrm{curl} _w\ Q_{h} \boldsymbol{v}-G_h\left(\mathrm{curl}\  \boldsymbol{v}\right)\right\|_{\mathcal{T}_h} + \left\|G_h\left(\mathrm{curl}\  \boldsymbol{v}\right)-\mathrm{curl}\  \boldsymbol{v}\right\|_{\mathcal{T}_h}
		%		%%%
		%		{\quad\color{blue}(\mbox{利用三角不等式})}
		%		%%%
		%		\\
		%		&\lesssim&
		\lesssim h^{t}\left\|\boldsymbol{v}\right\|_{t+1},
		%		%%%
		%		{\quad\color{blue}(\mbox{利用式 \eqref{Eq:Q_h-2} and Lemma \ref{lem:P_h-Q_h}})}
		%		%%%
	\end{eqnarray*}
	which completes the proof.
\end{proof}

Next, we define the following error norm as
\begin{eqnarray} \label{Eq:H-1}
	\interleave \boldsymbol{v}\interleave^2=
	\left\|\mathrm{curl}_w\ \boldsymbol{v}\right\|^2_{\mathcal{T}_h}
	+\|\boldsymbol{v}^0\|^2_{\mathcal{T}_h}
	+s\left(\boldsymbol{v}, \boldsymbol{v}\right),
\end{eqnarray}
where $\boldsymbol{v} \in V_h\cup(\mathbf{H}^{t+1}(\Omega)), 0\leq t\leq k$.

\begin{lemma}\label{lem:errorofQ_h}
	Let $\boldsymbol{v}\in \boldsymbol{H}^{t+1}(\Omega)$,  $0\leqslant t\leqslant k$. We have
	\begin{equation}\label{Eq:errorofQ_h}
		\interleave \boldsymbol{v}-Q_h\boldsymbol{v}\interleave \lesssim h^{t}\|\boldsymbol{v}\|_{t+1},
	\end{equation}
	where the projection $Q_h$ is defined in \eqref{Eq:Q_h}.
\end{lemma}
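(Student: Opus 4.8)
The plan is to bound separately the three contributions to $\interleave \boldsymbol{v}-Q_h\boldsymbol{v} \interleave^2$ coming from the definition \eqref{Eq:H-1} of the energy norm, and then add them. Two elementary facts will be used throughout: since $\boldsymbol{v}\in\boldsymbol{H}^{t+1}(\Omega)\subset\boldsymbol{H}^1(\Omega)$ is single-valued, on every $e\in\mathcal{E}_h$ one has $\Lbrace\boldsymbol{v}\Rbrace_e=\boldsymbol{v}|_e$ and $\Lbracket\boldsymbol{v}\Rbracket_e=\boldsymbol{0}$; and by \eqref{Eq:Q_h} the interior component of $\boldsymbol{v}-Q_h\boldsymbol{v}$ on each $\tau\in\mathcal{T}_h$ is exactly $\boldsymbol{v}-P_h\boldsymbol{v}$, so the $L^2$ piece and the stabilization piece involve only $\boldsymbol{v}-P_h\boldsymbol{v}$.

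For the curl term I would first identify $\mathrm{curl}_w\,\boldsymbol{v}$ on the smooth field. Regarding $\boldsymbol{v}$ as a weak function, Definition \ref{def:modifiedweakcurl} together with Green's formula gives, for every $\phi\in P_{k-1}(\tau)$,
\begin{equation*}
(\mathrm{curl}_w\,\boldsymbol{v},\phi)_\tau=(\boldsymbol{v},\mathbf{curl}\,\phi)_\tau+\langle\Lbrace\boldsymbol{v}\Rbrace_e\cdot\boldsymbol{t},\phi\rangle_{\partial\tau}=(\mathrm{curl}\,\boldsymbol{v},\phi)_\tau,
\end{equation*}
so that $\mathrm{curl}_w\,\boldsymbol{v}=G_h(\mathrm{curl}\,\boldsymbol{v})$ on each $\tau$. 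Since $\mathrm{curl}_w$ is linear, the triangle inequality then yields
\begin{equation*}
\|\mathrm{curl}_w(\boldsymbol{v}-Q_h\boldsymbol{v})\|_{\mathcal{T}_h}\leqslant\|G_h(\mathrm{curl}\,\boldsymbol{v})-\mathrm{curl}\,\boldsymbol{v}\|_{\mathcal{T}_h}+\|\mathrm{curl}\,\boldsymbol{v}-\mathrm{curl}_w\,Q_h\boldsymbol{v}\|_{\mathcal{T}_h}\lesssim h^{t}\|\boldsymbol{v}\|_{t+1},
\end{equation*}
where the first summand is controlled by the second estimate of Lemma \ref{lem:P_h-Q_h} (with $s=0$) and the second summand is exactly what Lemma \ref{Lem:Q_h} bounds.

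For the $L^2$ term, $\|(\boldsymbol{v}-Q_h\boldsymbol{v})^0\|_{\mathcal{T}_h}=\|\boldsymbol{v}-P_h\boldsymbol{v}\|_{\mathcal{T}_h}\lesssim h^{t+1}\|\boldsymbol{v}\|_{t+1}\lesssim h^{t}\|\boldsymbol{v}\|_{t+1}$ by the first estimate of Lemma \ref{lem:P_h-Q_h} with $s=0$. For the stabilization term \eqref{Eqn:2-2-3-b}, the vanishing of $\Lbracket\boldsymbol{v}\Rbracket_e$ on interior edges gives $\Lbracket(\boldsymbol{v}-Q_h\boldsymbol{v})^0\Rbracket_e=\Lbracket\boldsymbol{v}-P_h\boldsymbol{v}\Rbracket_e$ on every $e\in\mathcal{E}_h$, so
\begin{equation*}
s(\boldsymbol{v}-Q_h\boldsymbol{v},\boldsymbol{v}-Q_h\boldsymbol{v})=\sum_{e\in\mathcal{E}_h}h_e^{-1}\|\Lbracket\boldsymbol{v}-P_h\boldsymbol{v}\Rbracket_e\|_{0,e}^2\lesssim\sum_{\tau\in\mathcal{T}_h}h_\tau^{-1}\|\boldsymbol{v}-P_h\boldsymbol{v}\|_{0,\partial\tau}^2.
\end{equation*}
Applying the trace inequality $\|w\|_{0,\partial\tau}^2\lesssim h_\tau^{-1}\|w\|_{0,\tau}^2+h_\tau\|\nabla w\|_{0,\tau}^2$ to $w=\boldsymbol{v}-P_h\boldsymbol{v}$ and then the approximation estimates of Lemma \ref{lem:P_h-Q_h} (with $s=0$ and $s=1$) bounds the right-hand side by $h^{2t}\|\boldsymbol{v}\|_{t+1}^2$. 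Adding the three estimates gives $\interleave \boldsymbol{v}-Q_h\boldsymbol{v} \interleave^2\lesssim h^{2t}\|\boldsymbol{v}\|_{t+1}^2$, which is \eqref{Eq:errorofQ_h}.

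The only non-routine step is the curl term: one has to notice that on a single-valued field the modified weak curl collapses, through integration by parts, to the $L^2$-projection $G_h(\mathrm{curl}\,\boldsymbol{v})$; this is precisely what allows Lemma \ref{Lem:Q_h} to be invoked directly and reduces the whole estimate to the projection and trace bounds of Lemma \ref{lem:P_h-Q_h}, in the spirit of the second-order analysis of \cite{ZhangLin17:381}. The remaining pieces are standard computations.
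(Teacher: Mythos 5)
Your proof is correct and follows essentially the same route as the paper: the curl term via Lemma \ref{Lem:Q_h} together with the $G_h$ approximation bound of Lemma \ref{lem:P_h-Q_h}, the $L^2$ term directly from Lemma \ref{lem:P_h-Q_h}, and the stabilization term by the trace inequality applied to $\boldsymbol{v}-P_h\boldsymbol{v}$. Your only deviation is a welcome extra precision: you explicitly identify $\mathrm{curl}_w\,\boldsymbol{v}=G_h(\mathrm{curl}\,\boldsymbol{v})$ on a single-valued field and insert a triangle inequality, whereas the paper simply writes the curl contribution as $\|\mathrm{curl}\,\boldsymbol{v}-\mathrm{curl}_w Q_h\boldsymbol{v}\|_{\mathcal{T}_h}$ and invokes the same two lemmas.
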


\begin{proof}
	For the first and second terms in $ \interleave\boldsymbol{v}-Q_h\boldsymbol{v}\interleave^2$, noting that $\boldsymbol{v}\in \boldsymbol{H}^{t+1}(\Omega)$ and using Lemmas \ref{Lem:Q_h} and \ref{lem:P_h-Q_h}, we have
	\begin{equation} \label{Eq:errorofQ_h1}
		\left\|\mathrm{curl}\ \boldsymbol{v}-\mathrm{curl}_w\  Q_h\boldsymbol{v}\right\|^2_{\mathcal{T}_h} + \left\|\boldsymbol{v}-P_h\boldsymbol{v}\right\|^2_{\mathcal{T}_h}
		\lesssim
		h^{2t}\|\boldsymbol{v}\|^2_{t+1}.
	\end{equation}
	
	For the last term $s(\boldsymbol{v}-Q_h\boldsymbol{v}, \boldsymbol{v}-Q_h\boldsymbol{v})$ in $\interleave\boldsymbol{v}-Q_h\boldsymbol{v}\interleave^2$. Using the definitions of bilinear form $s(\cdot, \cdot)$  and interpolation $Q_h$, trace inequality and Lemma \ref{lem:P_h-Q_h}, we have
	\begin{eqnarray}
		\nonumber
		\lefteqn{s(\boldsymbol{v}-Q_h\boldsymbol{v}, \boldsymbol{v}-Q_h\boldsymbol{v})}
		\\  \nonumber
		&=&
		\sum\limits_{e\in \mathcal{E}_h}h^{-1}_e \left\|\Lbracket \boldsymbol{v}-P_h\boldsymbol{v}\Rbracket _e\right\|^2_{0, e}
		%		\\  \nonumber
		%		%%%
		%		&&
		%		{\quad\color{blue}(\mbox{利用$s(\cdot,  \cdot) $ 的定义式 \eqref{Eqn:2-2-3-b}和$Q_h\boldsymbol{v}$ 的定义 })}
		%		\\  \nonumber
		%		%%%
		%		&\lesssim&
		\lesssim
		\sum\limits_{e\in \mathcal{E}_h}h^{-1}_e\left\|\boldsymbol{v}-P_h\boldsymbol{v}\right\|^2_{0, e}
		%		%%%
		%		{\color{blue}(\mbox{利用跳跃的定义和三角不等式 })}
		%		%%%
		\\   \nonumber
		&\lesssim&
		\sum\limits_{e\in \mathcal{E}_h}h^{-1}_e\left(h^{-1}_\tau\left\|\boldsymbol{v}-P_h\boldsymbol{v}\right\|^2_{0, \omega_e}
		+ h_\tau \left\|\nabla\left(\boldsymbol{v}-P_h\boldsymbol{v}\right)\right\|^2_{0, \omega_e}\right) \\   \nonumber
		%		%%%
		%		&&
		%		{\quad\color{blue}(\mbox{利用迹不等式 })}
		%		\\  \nonumber
		%		%%%
		%		&\lesssim&
		%		\sum\limits_{e\in \mathcal{E}_h}\left(h^{-1}_e h^{-1}_\tau\left\|\boldsymbol{v}-P_h\boldsymbol{v}\right\|^2_{0, \omega_e}+h^{-1}_e h_\tau
		%		\left\|\nabla\left(\boldsymbol{v}-P_h\boldsymbol{v}\right)\right\|^2_{0, \omega_e}\right)
		\\ \label{Eq:errorofQ_h2}
		&\lesssim&
		h^{2t}\|\boldsymbol{v}\|^2_{t+1},
		%		%%%
		%		\\
		%		\nonumber
		%		&&
		%		{\quad\color{blue}(\mbox{利用$h_e \leqslant ch_\tau$, 剖分 $\mathcal{T}_h$ 的形状正则性和引理 \ref{lem:P_h-Q_h}})}
		%		%%%
	\end{eqnarray}
	where $\omega_e$ is the macro-element associated with the edge $e$ and the constant depends on the shape regularity of $\mathcal{T}_h$.
	
	At last, combining \eqref{Eq:H-1}, \eqref{Eq:errorofQ_h1} with \eqref{Eq:errorofQ_h2}, we have
	\begin{eqnarray*}
		\lefteqn{ \interleave\boldsymbol{v}-Q_h\boldsymbol{v}\interleave^2}  \\
		&=&
		\left\|\mathrm{curl}\ \boldsymbol{v}-\mathrm{curl}_w\ Q_h\boldsymbol{v}\right\|^2_{\mathcal{T}_h} + \left\|\boldsymbol{v}-P_h\boldsymbol{v}\right\|^2_{\mathcal{T}_h}
		+s(\boldsymbol{v}-Q_h\boldsymbol{v}, \boldsymbol{v}-Q_h\boldsymbol{v})
		% %%
		%%%
		\\
		&\lesssim&
		h^{2t}\|\boldsymbol{v}\|^2_{t+1},
		% %%
		%{\quad\color{blue}(\mbox{利用式\eqref{Eqn:2-6-w}和 \eqref{Eqn:2-6-x} })}
		%%%
	\end{eqnarray*}
	which completes the proof.
	%implies
	%$$
	% \left|\left\|\boldsymbol{v}-Q_h\boldsymbol{v}\right\|\right| \lesssim h^{t}\|\boldsymbol{v}\|_{t+1}.
	%$$
\end{proof}

\subsection{A priori error estimate}

The variational problem of \eqref{Equ:1.1}-\eqref{Equ:1.2} is to find  $\boldsymbol{u}\in \boldsymbol{H}_0(\mathrm{curl}; \Omega)$,  such that
\begin{equation}\label{CVP}
	(\mathrm{curl}\ \boldsymbol{u}, \mathrm{curl}\ \boldsymbol{v}) + (\boldsymbol{u}, \boldsymbol{v})=(\boldsymbol{f}, \boldsymbol{v}),
	\quad \forall \boldsymbol{v}\in \boldsymbol{H}_0(\mathrm{curl}; \Omega).
\end{equation}

Before we provide the a priori error estimate, we need to estimate $\interleave Q_h\boldsymbol{u}-\boldsymbol{u}_h\interleave$ for weak solution $\boldsymbol{u}$ of \eqref{CVP} and modified WG approximation  $\boldsymbol{u}_h$ of \eqref{Eq:MWG}. Hence, we introduce two technique equations, which can be found in  \eqref{Eq:compatibility-1} and \eqref{Eqn:2.16}, respectively.

%Before we provide the a priori error estimate, we need to estimate $\left|\left\|Q_h\boldsymbol{u}-\boldsymbol{u}_h\right\|\right|$ for the solution $\boldsymbol{u}$ of \eqref{Equ:1.1}-\eqref{Equ:1.2} and modified WG approximation  $\boldsymbol{u}_h$ of \eqref{Eq:MWG}. Hence, we introduce two technique equations, which can be found in  \eqref{Eq:compatibility-1} and \eqref{Eqn:2.16}, respectively.

%Note that $\boldsymbol{u}$ is the solution of \eqref{Equ:1.1}-\eqref{Equ:1.2}, using Green formula, we have
%\begin{equation}\label{CVP}
%(\mathrm{curl}\ \boldsymbol{u}, \mathrm{curl}\ \boldsymbol{v}) + (\boldsymbol{u}, \boldsymbol{v})=(\boldsymbol{f}, \boldsymbol{v}),
%\quad \forall \boldsymbol{v}\in \boldsymbol{H}_0(\mathrm{curl}; \Omega).
%\end{equation}
%In order to estimate $\left|\left\|Q_h\boldsymbol{u}-\boldsymbol{u}_h\right\|\right|$, we present next lemma.
\begin{lemma}\label{Lem:compatibility}
	Let $\boldsymbol{u}\in \boldsymbol{H} _0(\mathrm{curl}; \Omega)\cap \boldsymbol{H}^{t+1}(\Omega)$ is the solution of \eqref{CVP}, where $1/2 \langle t\leqslant k$. Then for any $\boldsymbol{v}_h=\left\{\boldsymbol{v}^0_h, \Lbrace\boldsymbol{v}^0_h\Rbrace_e\right\}\in \boldsymbol{V}^0_h$, we have
	\begin{equation}\label{Eq:compatibility-1}
		\left(\mathrm{curl}\ \boldsymbol{u}, \mathrm{curl}_w\ \boldsymbol{v}_h\right)_{\mathcal{T}_h}+\left(\boldsymbol{u}, \boldsymbol{v}^0_h \right)_{\mathcal{T}_h}-\left(\boldsymbol{f}, \boldsymbol{v}^0_h \right)_{\mathcal{T}_h}-E(\boldsymbol{u}, \boldsymbol{v}_h)
		=0,
	\end{equation}
	where
	\begin{equation}\label{Eq:E}
		E(\boldsymbol{u}, \boldsymbol{v}_h)
		=\sum\limits_{\tau\in \mathcal{T}_h}\left\langle G_h(\mathrm{curl}\ \boldsymbol{u})-\mathrm{curl}\  \boldsymbol{u},  (\Lbrace\boldsymbol{v}^0_h\Rbrace_e-\boldsymbol{v}^0_h)\cdot\boldsymbol{t}\right\rangle_{\partial \tau}.
	\end{equation}
\end{lemma}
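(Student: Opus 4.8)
The plan is to start from the continuous variational problem \eqref{CVP} and test it against the interior component $\boldsymbol{v}_h^0$ of an arbitrary $\boldsymbol{v}_h=\{\boldsymbol{v}_h^0,\Lbrace\boldsymbol{v}_h^0\Rbrace_e\}\in \boldsymbol{V}^0_h$. Since $\boldsymbol{v}_h^0\in\boldsymbol{L}^2(\Omega)$ is an admissible test function (it need not be in $\boldsymbol{H}_0(\mathrm{curl};\Omega)$, so strictly one uses the strong form \eqref{Equ:1.1} integrated against $\boldsymbol{v}_h^0$), we get
\[
(\mathbf{curl}\,\mathrm{curl}\,\boldsymbol{u},\boldsymbol{v}_h^0)_{\mathcal{T}_h}+(\boldsymbol{u},\boldsymbol{v}_h^0)_{\mathcal{T}_h}=(\boldsymbol{f},\boldsymbol{v}_h^0)_{\mathcal{T}_h}.
\]
The crux is then to rewrite the first term so that the discrete modified weak curl of $\boldsymbol{v}_h$ appears. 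First I would integrate by parts on each element $\tau$: by Green's formula for the $\mathbf{curl}$/$\mathrm{curl}$ pair,
\[
(\mathbf{curl}\,\mathrm{curl}\,\boldsymbol{u},\boldsymbol{v}_h^0)_\tau=(\mathrm{curl}\,\boldsymbol{u},\mathrm{curl}\,\boldsymbol{v}_h^0)_\tau-\langle \mathrm{curl}\,\boldsymbol{u},\boldsymbol{v}_h^0\cdot\boldsymbol{t}\rangle_{\partial\tau}.
\]
This is the step where the lower-order/curl-structure bookkeeping must be done carefully, and it is the place I expect the main obstacle: one has to match the element-interior term $(\mathrm{curl}\,\boldsymbol{u},\mathrm{curl}\,\boldsymbol{v}_h^0)_\tau$ against $(\mathrm{curl}\,\boldsymbol{u},\mathrm{curl}_w\,\boldsymbol{v}_h)_\tau$, and these differ precisely by a boundary contribution that has to be produced from Definition~\ref{def:modifiedweakcurl}.

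Next I would invoke the definition of the discrete modified weak curl. Because $\mathrm{curl}\,\boldsymbol{u}$ is not a polynomial, I first replace it by its $L^2$ projection $G_h(\mathrm{curl}\,\boldsymbol{u})\in P_{k-1}(\tau)$: since $\mathrm{curl}_w\,\boldsymbol{v}_h\in P_{k-1}(\tau)$ we have $(\mathrm{curl}\,\boldsymbol{u},\mathrm{curl}_w\,\boldsymbol{v}_h)_\tau=(G_h(\mathrm{curl}\,\boldsymbol{u}),\mathrm{curl}_w\,\boldsymbol{v}_h)_\tau$. Now apply \eqref{equ:mweakcurl} with $\phi=G_h(\mathrm{curl}\,\boldsymbol{u})$:
\[
(\mathrm{curl}_w\,\boldsymbol{v}_h,G_h(\mathrm{curl}\,\boldsymbol{u}))_\tau=(\boldsymbol{v}_h^0,\mathbf{curl}\,G_h(\mathrm{curl}\,\boldsymbol{u}))_\tau+\langle\Lbrace\boldsymbol{v}_h^0\Rbrace_e\cdot\boldsymbol{t},G_h(\mathrm{curl}\,\boldsymbol{u})\rangle_{\partial\tau}.
\]
Integrating the first term on the right back by parts (Green's formula again) turns $(\boldsymbol{v}_h^0,\mathbf{curl}\,G_h(\mathrm{curl}\,\boldsymbol{u}))_\tau$ into $(\mathrm{curl}\,\boldsymbol{v}_h^0,G_h(\mathrm{curl}\,\boldsymbol{u}))_\tau-\langle\boldsymbol{v}_h^0\cdot\boldsymbol{t},G_h(\mathrm{curl}\,\boldsymbol{u})\rangle_{\partial\tau}$, and using the projection property once more, $(\mathrm{curl}\,\boldsymbol{v}_h^0,G_h(\mathrm{curl}\,\boldsymbol{u}))_\tau=(\mathrm{curl}\,\boldsymbol{v}_h^0,\mathrm{curl}\,\boldsymbol{u})_\tau$ modulo a $G_h$-projection on $\mathrm{curl}\,\boldsymbol{v}_h^0$ which I will handle by symmetry. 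Collecting terms, the two element-interior curl-curl contributions cancel and what survives on $\partial\tau$ is exactly $\langle G_h(\mathrm{curl}\,\boldsymbol{u}),(\Lbrace\boldsymbol{v}_h^0\Rbrace_e-\boldsymbol{v}_h^0)\cdot\boldsymbol{t}\rangle_{\partial\tau}$ together with the term $-\langle\mathrm{curl}\,\boldsymbol{u},(\Lbrace\boldsymbol{v}_h^0\Rbrace_e-\boldsymbol{v}_h^0)\cdot\boldsymbol{t}\rangle_{\partial\tau}$ that arose from writing the original $-\langle\mathrm{curl}\,\boldsymbol{u},\boldsymbol{v}_h^0\cdot\boldsymbol{t}\rangle_{\partial\tau}$ against the $\Lbrace\cdot\Rbrace_e$ edge term; summing over $\tau$ these two combine into $-E(\boldsymbol{u},\boldsymbol{v}_h)$ as defined in \eqref{Eq:E}.

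Finally I would justify that the purely tangential edge sums involving $\mathrm{curl}\,\boldsymbol{u}$ alone (as opposed to $\mathrm{curl}\,\boldsymbol{u}$ paired against the jump $\Lbrace\boldsymbol{v}_h^0\Rbrace_e-\boldsymbol{v}_h^0$) telescope to zero: on each interior edge $e$ the quantity $\mathrm{curl}\,\boldsymbol{u}$ is single-valued (it lies in $H^1(\Omega)$ when $\boldsymbol{u}\in\boldsymbol{H}^{t+1}$ with $t>1/2$, which is why that regularity hypothesis appears), while $\Lbrace\boldsymbol{v}_h^0\Rbrace_e$ is single-valued by construction and the contributions from the two adjacent elements carry opposite tangent vectors $\boldsymbol{t}_1,\boldsymbol{t}_2$; on boundary edges $\boldsymbol{v}_h^b\cdot\boldsymbol{t}=\Lbrace\boldsymbol{v}_h^0\Rbrace_e\cdot\boldsymbol{t}=0$ since $\boldsymbol{v}_h\in\boldsymbol{V}^0_h$. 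Assembling everything and moving $(\boldsymbol{f},\boldsymbol{v}_h^0)_{\mathcal{T}_h}$ and $(\boldsymbol{u},\boldsymbol{v}_h^0)_{\mathcal{T}_h}$ to the correct side yields \eqref{Eq:compatibility-1}. The main risk in the write-up is keeping the signs and the tangent-vector orientations consistent across the repeated integrations by parts, which is exactly the delicate point the introduction warns about.
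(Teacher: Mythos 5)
Your proposal is correct and follows essentially the same route as the paper's proof: replace $\mathrm{curl}\,\boldsymbol{u}$ by its $L^2$ projection $G_h(\mathrm{curl}\,\boldsymbol{u})$, apply Definition~\ref{def:modifiedweakcurl} with $\phi=G_h(\mathrm{curl}\,\boldsymbol{u})$, integrate by parts twice on each element, and kill the single-valued edge terms $\sum_\tau\langle\mathrm{curl}\,\boldsymbol{u},\Lbrace\boldsymbol{v}^0_h\Rbrace_e\cdot\boldsymbol{t}\rangle_{\partial\tau}$ via the continuity of $\mathrm{curl}\,\boldsymbol{u}$ and the boundary condition in $V^0_h$. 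The only cosmetic difference is direction (you start from the strong form and work toward $\mathrm{curl}_w$, the paper starts from $(\mathrm{curl}\,\boldsymbol{u},\mathrm{curl}_w\,\boldsymbol{v}_h)$ and works toward the strong form), and your ``handle by symmetry'' remark is just the observation that $\mathrm{curl}\,\boldsymbol{v}^0_h$ already lies in $P_{k-1}(\tau)$, so the projection identity there is exact.
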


\begin{proof}
	Note that $G_h$ is $L^2$ projection, using the definition of discrete modified weak curl \eqref{equ:mweakcurl} and Green formula leads to
	\begin{eqnarray}
		\nonumber
		\lefteqn{\left(\mathrm{curl}\ \boldsymbol{u}, \mathrm{curl}_w\ \boldsymbol{v}_h\right)_\tau
			=
			\left(G_h(\mathrm{curl}\ \boldsymbol{u}), \mathrm{curl}_w\ \boldsymbol{v}_h\right)_\tau
			%			%%%
			%			{\quad\color{blue}(\mbox{利用投影算子 $G_h$ 的定义})}
			%			%%%
		}
		\\  \nonumber
		&=&
		\left(\mathbf{curl} (G_h(\mathrm{curl}\ \boldsymbol{u})), \boldsymbol{v}^0_h \right)_\tau+\left\langle G_h(\mathrm{curl}\ \boldsymbol{u}), \Lbrace\boldsymbol{v}^0_h\Rbrace_e\cdot\boldsymbol{t} \right\rangle_{\partial \tau}
		%		%%%
		%		{\color{blue}(\mbox{利用式\eqref{equ:mweakcurl}})}
		%		%%%
		\\ \nonumber
		&=&
		\left(G_h(\mathrm{curl}\ \boldsymbol{u}), \mathrm{curl}\ \boldsymbol{v}^0_h \right)_\tau-\left\langle G_h(\mathrm{curl}\ \boldsymbol{u}), \boldsymbol{v}^0_h\cdot\boldsymbol{t}\right\rangle_{\partial \tau}  \\ \nonumber
		&&\ \ +\left\langle G_h(\mathrm{curl}\ \boldsymbol{u}),  \Lbrace\boldsymbol{v}^0_h\Rbrace_e\cdot\boldsymbol{t}\right\rangle_{\partial \tau}   \\ \nonumber
		%		%%%
		%		&&
		%		{\color{blue}(\mbox{利用格林公式})}
		%		\\ \nonumber
		%		%%%
		%&=&
		%\left(G_h(\mathrm{curl} \boldsymbol{u}), \mathrm{curl}\boldsymbol{v}^0_h \right)_\tau+\left\langle G_h(\mathrm{curl} \boldsymbol{u}), \boldsymbol{v}^0_h\times \boldsymbol{n}\right\rangle_{\partial \tau}  \\ \nonumber
		%&&
		%-\left\langle G_h(\mathrm{curl} \boldsymbol{u}), \Lbrace\boldsymbol{v}^0_h\Rbrace_e \times \boldsymbol{n}\right\rangle_{\partial \tau}   \\ \nonumber
		&=&
		\left(\mathrm{curl}\ \boldsymbol{u}, \mathrm{curl}\ \boldsymbol{v}^0_h \right)_\tau
		+\left\langle G_h(\mathrm{curl}\ \boldsymbol{u}),  \left(\Lbrace\boldsymbol{v}^0_h\Rbrace_e-\boldsymbol{v}^0_h\right)\cdot\boldsymbol{t}\right\rangle_{\partial \tau}
		%		%%%
		%		{\quad\color{blue}(\mbox{利用投影算子 $G_h$ 的定义})}
		%		%%%
		\\ \nonumber
		&=&
		\left(\mathbf{curl}\mathrm{curl}\ \boldsymbol{u}, \boldsymbol{v}^0_h \right)_\tau
		+\left\langle \mathrm{curl}\ \boldsymbol{u},  \boldsymbol{v}^0_h\cdot\boldsymbol{t}\right\rangle_{\partial \tau}\\ \label{Eqn:2.4.10}
		&& \  +\left\langle G_h(\mathrm{curl}\ \boldsymbol{u}),  \left(\Lbrace\boldsymbol{v}^0_h\Rbrace_e-\boldsymbol{v}^0_h\right)\cdot\boldsymbol{t}\right\rangle_{\partial \tau}.
		%		%%%
		%		\\ \nonumber
		%		&&
		%		{\color{blue}(\mbox{利用格林公式})}
		%		%%%
	\end{eqnarray}
	
	Note that $\boldsymbol{u}\in \boldsymbol{H}^{t+1}(\Omega)$ and $(\mathrm{curl}\ \boldsymbol{u})|_{\partial \tau}$ is continuous on $\tau$, we know
	%Since that $\Lbrace\boldsymbol{v}^0_h\Rbrace_e$ , 从而 $\left. \Lbracket \Lbrace\boldsymbol{v}^0_h\Rbrace_e\Rbracket _e\right|_e=0$; 在边界面 $f\in \mathcal{F}^{\partial}_h$ 上, $\left. \Lbrace\boldsymbol{v}^0_h\Rbrace_e\times \boldsymbol{n}\right|_e=0$, 于是有
	\begin{equation}\label{Eqn:2.4.11}
		\sum\limits_{\tau\in \mathcal{T}_h}\left\langle \mathrm{curl}\ \boldsymbol{u}, \Lbrace\boldsymbol{v}^0_h \Rbrace_e\cdot\boldsymbol{t}\right\rangle_{\partial \tau}
		=\sum\limits_{e\in \mathcal{E}_h}\left\langle \Lbracket\mathrm{curl}\ \boldsymbol{u}\Rbracket_e,   \Lbrace\boldsymbol{v}^0_h\Rbrace_e \right\rangle_e
		=0, 
	\end{equation}
where the tangential jump $\Lbracket\mathrm{curl} \boldsymbol{u}\Rbracket_e=$	
	
	Adding $\left(\boldsymbol{u}, \boldsymbol{v}^0_h \right)_{\mathcal{T}_h}$ in both sides of \eqref{Eqn:2.4.10}, using \eqref{Eqn:2.4.11} and \eqref{CVP}, we have
	\begin{eqnarray*}
		\lefteqn{\left(\mathrm{curl}\ \boldsymbol{u}, \mathrm{curl}_w\  \boldsymbol{v}_h\right)_{\mathcal{T}_h}+\left(\boldsymbol{u}, \boldsymbol{v}^0_h \right)_{\mathcal{T}_h}=
			\sum\limits_{\tau\in \mathcal{T}_h} \left(\left(\mathrm{curl}\ \boldsymbol{u}, \mathrm{curl}_w\ \boldsymbol{v}_h\right)_\tau+\left(\boldsymbol{u}, \boldsymbol{v}^0_h \right)_\tau\right)}
		\\
		&=&
		\sum\limits_{\tau\in \mathcal{T}_h}\Big( \left(\mathbf{curl}\mathrm{curl}\ \boldsymbol{u}, \boldsymbol{v}^0_h \right)_\tau +\left(\boldsymbol{u}, \boldsymbol{v}^0_h \right)_\tau
		+\left\langle \mathrm{curl}\ \boldsymbol{u}, \boldsymbol{v}^0_h\cdot\boldsymbol{t}\right\rangle_{\partial \tau} \\
		&&\ \ +\left\langle G_h(\mathrm{curl}\ \boldsymbol{u}),  \left(\Lbrace\boldsymbol{v}^0_h\Rbrace_e-\boldsymbol{v}^0_h\right)\cdot\boldsymbol{t}\right\rangle_{\partial \tau} \Big) \\
		%		%%%
		%		&&
		%		{\quad\color{blue}(\mbox{利用式 \eqref{Eqn:2.4.10} 和 \eqref{Eqn:2.4.11}})}
		%		\\
		%		%%%
		&=&
		\sum\limits_{\tau\in \mathcal{T}_h} \left(\left(\mathbf{curl} \mathrm{curl}\ \boldsymbol{u}+\boldsymbol{u}, \boldsymbol{v}^0_h \right)_\tau -\left\langle \mathrm{curl}\ \boldsymbol{u},  (\Lbrace\boldsymbol{v}^0_h\Rbrace_e-\boldsymbol{v}^0_h)\cdot\boldsymbol{t}\right\rangle_{\partial \tau} \right. \\
		&& \ \ + \left. \left\langle G_h(\mathrm{curl}\ \boldsymbol{u}),  \left(\Lbrace\boldsymbol{v}^0_h\Rbrace_e-\boldsymbol{v}^0_h\right)\cdot\boldsymbol{t}\right\rangle_{\partial \tau}  \right)
		%		%%%
		%		{\quad\color{blue}(\mbox{合并})}
		%		%%%
		\\
		&=&
		\left(\boldsymbol{f}, \boldsymbol{v}^0_h \right)_{\mathcal{T}_h}+\sum\limits_{\tau\in \mathcal{T}_h}\left\langle G_h(\mathrm{curl}\ \boldsymbol{u})-\mathrm{curl}\ \boldsymbol{u},  (\Lbrace\boldsymbol{v}^0_h\Rbrace_e-\boldsymbol{v}^0_h)\cdot\boldsymbol{t}\right\rangle_{\partial \tau},
		%		%%%
		%		%\\
		%		%&&
		%		{\color{blue}(\mbox{利用连续变分形式\eqref{CVP}})}
		%		%%%
	\end{eqnarray*}
	Then we complete the proof by making a definition
	$$
	E(\boldsymbol{u}, \boldsymbol{v}_h)=\sum\limits_{\tau\in \mathcal{T}_h}\left\langle G_h(\mathrm{curl}\ \boldsymbol{u})-\mathrm{curl}\ \boldsymbol{u},  (\Lbrace\boldsymbol{v}^0_h\Rbrace_e-\boldsymbol{v}^0_h)\cdot\boldsymbol{t}\right\rangle_{\partial \tau}.
	$$
\end{proof}

The following Lemma is based on the average and tangential jump of vector function \eqref{vectorinterior}, \eqref{vectorboundary} and the above definitions of the average and tangential jump of scalar function \eqref{average}, \eqref{jump}.
\begin{lemma}
	For any $\phi\in H^s\left(\Omega\right)$ and $\boldsymbol{v}\in \boldsymbol{H}^s\left(\Omega\right)$, $s>1/2$, we have
	%	\begin{eqnarray}\label{identity}
	%		\sum\limits_{\tau\in\mathcal{T}_h} \langle\boldsymbol{t}\cdot\boldsymbol{v},\phi\rangle _{\partial\tau}
	%		=\sum\limits_{e\in \mathcal{E}^0_h}\langle\Lbrace\phi\Rbrace_e,\Lbracket \boldsymbol{v}\Rbracket _e\rangle_e +\sum\limits_{e\in \mathcal{E}_h}(\Lbrace\boldsymbol{v}\Rbrace_e, \Lbracket \phi\Rbracket_e)_e.
	%	\end{eqnarray}
	%In particular, we can get
	\begin{equation}\label{Eqn:2.16}
		\sum\limits_{\tau\in \mathcal{T}_h} \langle (\Lbrace\boldsymbol{v}\Rbrace_e-\boldsymbol{v})\cdot\boldsymbol{t}, \phi\rangle_{\partial \tau}
		= -\sum\limits_{e\in \mathcal{E}^0_h}\langle\Lbrace\phi\Rbrace_e, \Lbracket \boldsymbol{v}\Rbracket_e \rangle_e.
	\end{equation}
	
\end{lemma}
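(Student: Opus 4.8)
The plan is to prove the identity \eqref{Eqn:2.16} by a direct edge-by-edge rearrangement of the boundary sum, distinguishing interior edges from boundary edges. First I would expand the left-hand side using the definition of the discrete inner product over $\partial\tau$, writing
\[
\sum_{\tau\in\mathcal{T}_h}\langle(\Lbrace\boldsymbol{v}\Rbrace_e-\boldsymbol{v})\cdot\boldsymbol{t},\phi\rangle_{\partial\tau}
=\sum_{\tau\in\mathcal{T}_h}\sum_{e\subset\partial\tau}\langle(\Lbrace\boldsymbol{v}\Rbrace_e-\boldsymbol{v}|_\tau)\cdot\boldsymbol{t}_{\tau,e},\phi|_\tau\rangle_e,
\]
where $\boldsymbol{t}_{\tau,e}$ is the tangent to $e$ induced by the orientation of $\tau$. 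Then I would interchange the order of summation so that the sum runs over edges $e\in\mathcal{E}_h$, with each interior edge $e=\partial\tau_1\cap\partial\tau_2$ contributing the two terms coming from $\tau_1$ and $\tau_2$, and each boundary edge contributing a single term. This is the organizational backbone of the argument.

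Next, for a fixed interior edge $e$ shared by $\tau_1$ and $\tau_2$, I would collect the contribution
\[
\langle(\Lbrace\boldsymbol{v}\Rbrace_e-\boldsymbol{v}|_{\tau_1})\cdot\boldsymbol{t}_1,\phi|_{\tau_1}\rangle_e
+\langle(\Lbrace\boldsymbol{v}\Rbrace_e-\boldsymbol{v}|_{\tau_2})\cdot\boldsymbol{t}_2,\phi|_{\tau_2}\rangle_e,
\]
and substitute $\Lbrace\boldsymbol{v}\Rbrace_e=(\boldsymbol{v}|_{\tau_1}+\boldsymbol{v}|_{\tau_2})/2$. The key algebraic observation is that $\Lbrace\boldsymbol{v}\Rbrace_e-\boldsymbol{v}|_{\tau_1}=\tfrac12(\boldsymbol{v}|_{\tau_2}-\boldsymbol{v}|_{\tau_1})$ and $\Lbrace\boldsymbol{v}\Rbrace_e-\boldsymbol{v}|_{\tau_2}=\tfrac12(\boldsymbol{v}|_{\tau_1}-\boldsymbol{v}|_{\tau_2})$, and that $\boldsymbol{t}_2=-\boldsymbol{t}_1$ on $e$. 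After substitution, a short computation shows the pair of terms equals $-\tfrac12\langle(\boldsymbol{v}|_{\tau_1}\cdot\boldsymbol{t}_1+\boldsymbol{v}|_{\tau_2}\cdot\boldsymbol{t}_2)(\phi|_{\tau_1}+\phi|_{\tau_2}),1\rangle_e=-\langle\Lbrace\phi\Rbrace_e,\Lbracket\boldsymbol{v}\Rbracket_e\rangle_e$, using the definitions \eqref{vectorinterior} and \eqref{average}. For a boundary edge $e\subset\partial\tau$, the definition \eqref{vectorboundary} gives $\Lbrace\boldsymbol{v}\Rbrace_e=\boldsymbol{v}|_\tau$, so the single term $\langle(\Lbrace\boldsymbol{v}\Rbrace_e-\boldsymbol{v}|_\tau)\cdot\boldsymbol{t},\phi|_\tau\rangle_e$ vanishes identically; this is exactly why the right-hand side of \eqref{Eqn:2.16} sums only over $\mathcal{E}^0_h$. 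Summing the interior-edge contributions then yields the claimed formula.

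The regularity hypothesis $\phi\in H^s(\Omega)$, $\boldsymbol{v}\in\boldsymbol{H}^s(\Omega)$ with $s>1/2$ is needed only so that the traces of $\phi$ and $\boldsymbol{v}$ on each edge are well-defined in $L^2(e)$ and the edge inner products make sense; with that in hand the manipulation is purely pointwise on each edge and no further analytic input is required. The main (and only real) obstacle is bookkeeping: one must be careful with the orientation conventions for the tangent vectors $\boldsymbol{t}_1,\boldsymbol{t}_2$ so that the signs in $\Lbracket\boldsymbol{v}\Rbracket_e$ and $\Lbracket\phi\Rbracket_e$ as defined in \eqref{vectorinterior} and \eqref{average} come out consistently; once the relation $\boldsymbol{t}_2=-\boldsymbol{t}_1$ is used correctly, the cross terms in the expansion of $(\boldsymbol{v}|_{\tau_1}-\boldsymbol{v}|_{\tau_2})\cdot\boldsymbol{t}_1(\phi|_{\tau_1}+\phi|_{\tau_2})$ reassemble precisely into $\langle\Lbrace\phi\Rbrace_e,\Lbracket\boldsymbol{v}\Rbracket_e\rangle_e$ with the stated sign.
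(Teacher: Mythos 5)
Your proof is correct: the edge-by-edge regrouping, the identity $\Lbrace\boldsymbol{v}\Rbrace_e-\boldsymbol{v}|_{\tau_i}=\pm\tfrac12(\boldsymbol{v}|_{\tau_2}-\boldsymbol{v}|_{\tau_1})$ together with $\boldsymbol{t}_2=-\boldsymbol{t}_1$, and the vanishing of the boundary-edge terms all check out and reproduce the stated sign. The paper actually states this lemma without printing a proof, and your argument is exactly the standard rearrangement it implicitly relies on.
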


Based on the two technique equations \eqref{Eq:compatibility-1} and \eqref{Eqn:2.16},  it is turn to estimate $\interleave Q_h\boldsymbol{u}-\boldsymbol{u}_h\interleave$
for weak solution $\boldsymbol{u}$ of \eqref{CVP} and modified WG approximation  $\boldsymbol{u}_h$ of \eqref{Eq:MWG}.
%In order to establish the optimal order for the modified weak Galerkin solution $\boldsymbol{u}_h\in \boldsymbol{V}^0_h$ arising from \eqref{Eq:MWG}, we first prove next Lemma.
\begin{lemma}\label{lem:3.3.5}
	Assume that  $\boldsymbol{u}\in \boldsymbol{H} _0(\mathrm{curl}; \Omega)\cap \boldsymbol{H}^{t+1}(\Omega)$ with $1/2\langle t \leqslant k$   and $\boldsymbol{u}_h\in \boldsymbol{V}^0_h$ are the solutions of \eqref{CVP} and \eqref{Eq:MWG}, respectively. If $h<1$,  then we have \begin{equation*}\label{Eqn:2.4.19}
		\interleave Q_h\boldsymbol{u}-\boldsymbol{u}_h\interleave\lesssim h^t\|\boldsymbol{u}\|_{t+1},
	\end{equation*}
	where the constant depends on the shape regularity of $\mathcal{T}_h$.
\end{lemma}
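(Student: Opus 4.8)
The plan is to run the standard energy argument for a Galerkin-type scheme, with the twist that the consistency defect is carried by the term $E(\boldsymbol{u},\boldsymbol{v}_h)$ from Lemma \ref{Lem:compatibility}. Set $\boldsymbol{e}_h:=Q_h\boldsymbol{u}-\boldsymbol{u}_h\in\boldsymbol{V}^0_h$. By \eqref{Eq:H-1} and the definition of $a(\cdot,\cdot)$ we have $\interleave\boldsymbol{e}_h\interleave^2=a(\boldsymbol{e}_h,\boldsymbol{e}_h)$, so I would first derive the error equation. Using the scheme \eqref{Eq:MWG} to write $a(\boldsymbol{u}_h,\boldsymbol{e}_h)=(\boldsymbol{f},\boldsymbol{e}_h^0)_{\mathcal{T}_h}$, expanding $a(Q_h\boldsymbol{u},\boldsymbol{e}_h)$ with $(Q_h\boldsymbol{u})^0=P_h\boldsymbol{u}$, and eliminating $(\boldsymbol{f},\boldsymbol{e}_h^0)_{\mathcal{T}_h}$ through the compatibility identity \eqref{Eq:compatibility-1} with $\boldsymbol{v}_h=\boldsymbol{e}_h$, one gets
\begin{equation*}
	\interleave\boldsymbol{e}_h\interleave^2=\left(\mathrm{curl}_w\ Q_h\boldsymbol{u}-\mathrm{curl}\ \boldsymbol{u},\mathrm{curl}_w\ \boldsymbol{e}_h\right)_{\mathcal{T}_h}+\left(P_h\boldsymbol{u}-\boldsymbol{u},\boldsymbol{e}_h^0\right)_{\mathcal{T}_h}+s(Q_h\boldsymbol{u},\boldsymbol{e}_h)+E(\boldsymbol{u},\boldsymbol{e}_h).
\end{equation*}
Since $\boldsymbol{u}\in\boldsymbol{H}_0(\mathrm{curl};\Omega)$ its tangential trace is single-valued across interior edges and vanishes on $\partial\Omega$, hence $\Lbracket\boldsymbol{u}\Rbracket_e=0$ for every $e\in\mathcal{E}_h$ and $s(\boldsymbol{u},\cdot)\equiv0$; this lets me replace $s(Q_h\boldsymbol{u},\boldsymbol{e}_h)$ by $s(Q_h\boldsymbol{u}-\boldsymbol{u},\boldsymbol{e}_h)$.

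Next I would bound the first three terms, each by $h^t\|\boldsymbol{u}\|_{t+1}\interleave\boldsymbol{e}_h\interleave$. For the first, Cauchy--Schwarz and $\|\mathrm{curl}_w\ \boldsymbol{e}_h\|_{\mathcal{T}_h}\le\interleave\boldsymbol{e}_h\interleave$ give $\lesssim\|\mathrm{curl}_w\ Q_h\boldsymbol{u}-\mathrm{curl}\ \boldsymbol{u}\|_{\mathcal{T}_h}\interleave\boldsymbol{e}_h\interleave$, which is controlled by Lemma \ref{Lem:Q_h}. For the second, Cauchy--Schwarz and $\|\boldsymbol{e}_h^0\|_{\mathcal{T}_h}\le\interleave\boldsymbol{e}_h\interleave$ give $\lesssim\|P_h\boldsymbol{u}-\boldsymbol{u}\|_{\mathcal{T}_h}\interleave\boldsymbol{e}_h\interleave\lesssim h^{t+1}\|\boldsymbol{u}\|_{t+1}\interleave\boldsymbol{e}_h\interleave$ by Lemma \ref{lem:P_h-Q_h}, which is $\lesssim h^{t}\|\boldsymbol{u}\|_{t+1}\interleave\boldsymbol{e}_h\interleave$ because $h<1$. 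For the third, the Cauchy--Schwarz inequality for the positive semidefinite form $s(\cdot,\cdot)$ and Lemma \ref{lem:errorofQ_h} give $|s(Q_h\boldsymbol{u}-\boldsymbol{u},\boldsymbol{e}_h)|\le s(\boldsymbol{u}-Q_h\boldsymbol{u},\boldsymbol{u}-Q_h\boldsymbol{u})^{1/2}s(\boldsymbol{e}_h,\boldsymbol{e}_h)^{1/2}\le\interleave\boldsymbol{u}-Q_h\boldsymbol{u}\interleave\interleave\boldsymbol{e}_h\interleave\lesssim h^t\|\boldsymbol{u}\|_{t+1}\interleave\boldsymbol{e}_h\interleave$.

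The main obstacle is the consistency term $E(\boldsymbol{u},\boldsymbol{e}_h)$ from \eqref{Eq:E}. Here the second technical equation \eqref{Eqn:2.16}, applied with $\boldsymbol{v}=\boldsymbol{e}_h^0$ and the (element-wise smooth) $\phi=G_h(\mathrm{curl}\ \boldsymbol{u})-\mathrm{curl}\ \boldsymbol{u}$, turns the element-boundary sum into an interior-edge sum,
\begin{equation*}
	E(\boldsymbol{u},\boldsymbol{e}_h)=-\sum_{e\in\mathcal{E}^0_h}\left\langle\Lbrace G_h(\mathrm{curl}\ \boldsymbol{u})-\mathrm{curl}\ \boldsymbol{u}\Rbrace_e,\Lbracket\boldsymbol{e}_h^0\Rbracket_e\right\rangle_e,
\end{equation*}
and then I would split each factor with the weights $h_e^{1/2}$ and $h_e^{-1/2}$ and apply Cauchy--Schwarz. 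The $h_e^{-1/2}$-weighted jump factor is exactly $\le s(\boldsymbol{e}_h,\boldsymbol{e}_h)^{1/2}\le\interleave\boldsymbol{e}_h\interleave$; the remaining factor $\big(\sum_{e\in\mathcal{E}^0_h}h_e\|\Lbrace G_h(\mathrm{curl}\ \boldsymbol{u})-\mathrm{curl}\ \boldsymbol{u}\Rbrace_e\|_{0,e}^2\big)^{1/2}$ is passed from edges to element boundaries using shape-regularity, and then bounded by the trace inequality together with the second estimate in Lemma \ref{lem:P_h-Q_h} (the $s=0$ and $s=1$ cases), which yields the factor $h^t\|\boldsymbol{u}\|_{t+1}$. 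Thus $|E(\boldsymbol{u},\boldsymbol{e}_h)|\lesssim h^t\|\boldsymbol{u}\|_{t+1}\interleave\boldsymbol{e}_h\interleave$. Collecting the four estimates gives $\interleave\boldsymbol{e}_h\interleave^2\lesssim h^t\|\boldsymbol{u}\|_{t+1}\interleave\boldsymbol{e}_h\interleave$, and dividing by $\interleave\boldsymbol{e}_h\interleave$ (the bound being trivial when it vanishes) completes the proof. I expect the bookkeeping in $E(\boldsymbol{u},\boldsymbol{e}_h)$ — correctly invoking \eqref{Eqn:2.16} for this rough pair, matching the edge weights to the stabilization $s(\cdot,\cdot)$, and, when $1/2<t<1$, using the appropriate (fractional) trace inequality on $G_h(\mathrm{curl}\ \boldsymbol{u})-\mathrm{curl}\ \boldsymbol{u}$ — to be the delicate part, while the other three terms are routine.
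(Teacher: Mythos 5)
Your proposal is correct and follows essentially the same route as the paper: the same error equation obtained from \eqref{Eq:MWG} and the compatibility identity \eqref{Eq:compatibility-1}, the same four-term decomposition of the right-hand side, the same use of \eqref{Eqn:2.16} with the $h_e^{\pm 1/2}$ weighting to control $E(\boldsymbol{u},\cdot)$ by $\sqrt{s(\cdot,\cdot)}$, and the same test-function choice $\boldsymbol{v}_h=Q_h\boldsymbol{u}-\boldsymbol{u}_h$ with $h<1$ absorbing the extra power of $h$. The only cosmetic difference is that you bound $s(Q_h\boldsymbol{u},\boldsymbol{e}_h)$ via Cauchy--Schwarz on the form $s$ together with Lemma \ref{lem:errorofQ_h}, whereas the paper repeats the trace-inequality computation inline; the content is identical.
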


\begin{proof}
	For any $\boldsymbol{v}_h=\left\{\boldsymbol{v}^0_h, \Lbrace\boldsymbol{v}^0_h\Rbrace_e\right\} \in \boldsymbol{V}^0_h$, using \eqref{Eq:compatibility-1} and \eqref{Eq:MWG}, we have
	\begin{eqnarray}\nonumber
		\lefteqn{\left(\mathrm{curl}_w\ Q_h\boldsymbol{u}, \mathrm{curl}_w\ \boldsymbol{v}_h\right)_{\mathcal{T}_h}+\left(P_h\boldsymbol{u}, \boldsymbol{v}^0_h\right)_{\mathcal{T}_h} } \\  \nonumber
		&=&
		\left(\mathrm{curl}_w\ Q_h\boldsymbol{u}, \mathrm{curl}_w\ \boldsymbol{v}_h\right)_{\mathcal{T}_h}+\left(P_h\boldsymbol{u}, \boldsymbol{v}^0_h\right)_{\mathcal{T}_h} \\ \nonumber
		&& \ \ +\Big(\left(\boldsymbol{f}, \boldsymbol{v}^0_h \right)_{\mathcal{T}_h} +E(\boldsymbol{u}, \boldsymbol{v}_h) -\left(\mathrm{curl}\ \boldsymbol{u}, \mathrm{curl}_w\ \boldsymbol{v}_h\right)_{\mathcal{T}_h}-\left(\boldsymbol{u}, \boldsymbol{v}^0_h \right)_{\mathcal{T}_h}\Big)  \\  \nonumber
		%		%%%
		%		&&
		%		{\color{blue}(\mbox{利用引理 \ref{Lem:compatibility} 的 \eqref{Eq:compatibility-1} 式})}  \\  \nonumber
		%		%%%
		&=&
		\left(\boldsymbol{f}, \boldsymbol{v}^0_h \right)_{\mathcal{T}_h} +E(\boldsymbol{u}, \boldsymbol{v}_h)+\left(\mathrm{curl}_w\ Q_h\boldsymbol{u}-\mathrm{curl}\ \boldsymbol{u}, \mathrm{curl}_w\ \boldsymbol{v}_h\right)_{\mathcal{T}_h} +\left(P_h\boldsymbol{u}-\boldsymbol{u}, \boldsymbol{v}^0_h\right)_{\mathcal{T}_h}
		%		%%%
		%		{\color{blue}(\mbox{合并})}
		%		%%%
		\\  \nonumber
		&=&
		(\mathrm{curl}_w\ \boldsymbol{u}_h, \mathrm{curl}_w\ \boldsymbol{v})_{\mathcal{T}_h}
		+(\boldsymbol{u}_h^0, \boldsymbol{v}^0_h)_{\mathcal{T}_h}  +s(\boldsymbol{u}_h,  \boldsymbol{v}_h)
		\\  \label{Eqn:2-6-m}
		&&
		+E(\boldsymbol{u}, \boldsymbol{v}_h) +\left(\mathrm{curl}_w\ Q_h\boldsymbol{u}-\mathrm{curl}\ \boldsymbol{u}, \mathrm{curl}_w\ \boldsymbol{v}_h\right)_{\mathcal{T}_h} +\left(P_h\boldsymbol{u}-\boldsymbol{u}, \boldsymbol{v}^0_h\right)_{\mathcal{T}_h}.
		%		%%%
		%		{\color{blue}(\mbox{利用式\eqref{Eq:MWG}})}
		%		%%%
	\end{eqnarray}
	Adding $s(Q_h\boldsymbol{u},  \boldsymbol{v}_h)$ in the both sides of \eqref{Eqn:2-6-m}, we obtain
	\begin{eqnarray*}
		\lefteqn{\left(\mathrm{curl}_w\ Q_h\boldsymbol{u}, \mathrm{curl}_w\ \boldsymbol{v}_h\right)_{\mathcal{T}_h}+\left(P_h\boldsymbol{u}, \boldsymbol{v}^0_h\right)_{\mathcal{T}_h} +s(Q_h\boldsymbol{u},  \boldsymbol{v}_h)}\\
		&=&(\mathrm{curl}_w\ \boldsymbol{u}_h, \mathrm{curl}_w\ \boldsymbol{v})_{\mathcal{T}_h}+(\boldsymbol{u}_h^0, \boldsymbol{v}^0_h)_{\mathcal{T}_h}+s(\boldsymbol{u}_h,  \boldsymbol{v}_h) \\
		&&+E(\boldsymbol{u}, \boldsymbol{v}_h)+\left(\mathrm{curl}_w\ Q_h\boldsymbol{u}-\mathrm{curl}\ \boldsymbol{u}, \mathrm{curl}_w\ \boldsymbol{v}_h\right)_{\mathcal{T}_h} \\ &&+\left(P_h\boldsymbol{u}-\boldsymbol{u}, \boldsymbol{v}^0_h\right)_{\mathcal{T}_h}+s(Q_h\boldsymbol{u},  \boldsymbol{v}_h),
	\end{eqnarray*}
	which can be rewritten as
	\begin{eqnarray}\nonumber
\lefteqn{(\mathrm{curl}_w (Q_h\boldsymbol{u}-\boldsymbol{u}_h), \mathrm{curl}_w\ \boldsymbol{v}_h)_{\mathcal{T}_h}+\left(P_h\boldsymbol{u}-\boldsymbol{u}_h^0, \boldsymbol{v}^0_h\right)_{\mathcal{T}_h}+ s(Q_h\boldsymbol{u}-\boldsymbol{u}_h,  \boldsymbol{v}_h)}\\\label{Eqn:2.4.14}
&&=E(\boldsymbol{u}, \boldsymbol{v}_h)+\left(\mathrm{curl}_w\ Q_h\boldsymbol{u}-\mathrm{curl} \boldsymbol{u}, \mathrm{curl}_w \boldsymbol{v}_h\right)_{\mathcal{T}_h} \nonumber \\ \label{Eqn:2.4.14}
&&+\left(P_h\boldsymbol{u}-\boldsymbol{u}, \boldsymbol{v}^0_h\right)_{\mathcal{T}_h}+s(Q_h\boldsymbol{u},  \boldsymbol{v}_h).
\end{eqnarray}
	Next, we will estimate each of these terms in the right hand side of \eqref{Eqn:2.4.14}.
	
	For the first term, using \eqref{Eq:E}, \eqref{Eqn:2.16}, Cauchy-Schwarz inequality, trace inequality and Lemma \ref{lem:P_h-Q_h}, we obtain
	\begin{eqnarray}
		\nonumber
		\lefteqn{E(\boldsymbol{u}, \boldsymbol{v}_h)
			=
			\sum\limits_{\tau\in \mathcal{T}_h}\left\langle G_h(\mathrm{curl}\ \boldsymbol{u})-\mathrm{curl}\ \boldsymbol{u}, \left(\Lbrace\boldsymbol{v}^0_h\Rbrace_e-\boldsymbol{v}^0_h\right)\cdot\boldsymbol{t} \right\rangle_{\partial \tau}}  \\ \nonumber
		%		%%%
		%		&&
		%		{\color{blue}(\mbox{利用式 \eqref{Eq:E} })}
		%		\\ \nonumber
		%		%%%
		%&=&
		%\sum\limits_{\tau\in \mathcal{T}_h}\left\langle \boldsymbol{n}\times \left(G_h(\mathrm{curl} \boldsymbol{u})-\mathrm{curl} \boldsymbol{u}\right), \boldsymbol{v}^0_h-\Lbrace\boldsymbol{v}^0_h\Rbrace_e\right\rangle_{\partial \tau}   \\  \nonumber
		&=&
		-\sum\limits_{e\in \mathcal{E}_h}\left\langle \Lbrace G_h(\mathrm{curl}\ \boldsymbol{u})-\mathrm{curl}\ \boldsymbol{u}\Rbrace_e, \Lbracket \boldsymbol{v}^0_h\Rbracket _e\right\rangle_e
		%		%%%
		%		{\quad\color{blue}(\mbox{利用式 \eqref{Eqn:2.16} })}
		%		%%%
		\\  \nonumber
		&=&
		-\sum\limits_{e\in \mathcal{E}_h}\left\langle h^{\frac{1}{2}}_e \Lbrace G_h(\mathrm{curl}\ \boldsymbol{u})-\mathrm{curl}\ \boldsymbol{u}\Rbrace_e, h^{-\frac{1}{2}}_e \Lbracket \boldsymbol{v}^0_h\Rbracket _e\right\rangle_e
		%		%%%
		%		{\quad\color{blue}(\mbox{同时乘以 $h^{\frac{1}{2}}_e$ 和 $h^{-\frac{1}{2}}_e$ })}
		%		%%%
		\\ \nonumber
		&\lesssim&
		\left(\sum\limits_{e\in \mathcal{E}_h}\int_e h_e\left|G_h(\mathrm{curl}\ \boldsymbol{u})-\mathrm{curl}\ \boldsymbol{u}\right|^2ds\right)^{\frac{1}{2}}
		\left(\sum\limits_{e\in \mathcal{E}_h}\int_e h^{-1}_e \left|\Lbracket \boldsymbol{v}^0_h\Rbracket _e\right|^2ds\right)^{\frac{1}{2}}  \\  \nonumber
		%		%%%
		%		&&
		%		{\quad\color{blue}(\mbox{利用 Cauchy-Schwarz 不等式 })}
		%		\\  \nonumber
		%		%%%
		&\lesssim&
		\sum\limits_{e\in \mathcal{E}_h}h^{\frac{1}{2}}_e \left\|G_h(\mathrm{curl}\ \boldsymbol{u})-\mathrm{curl}\ \boldsymbol{u}\right\|_{0, e} \sqrt{s(\boldsymbol{v}_h, \boldsymbol{v}_h)}
		%		%%%
		%		{\quad\color{blue}(\mbox{利用 $s(\boldsymbol{v},  \boldsymbol{v}) $ 的定义式 \eqref{Eqn:2-2-3-b} })}
		%		%%%
		\\  \nonumber
		&\lesssim&
		\sum\limits_{e\in \mathcal{E}_h}h^{\frac{1}{2}}_e h^{-\frac{1}{2}}_\tau \left(\left\|G_h(\mathrm{curl}\ \boldsymbol{u})-\mathrm{curl}\ \boldsymbol{u}\right\|_{0, \omega_e} \right. \\  \nonumber
		&&\ \ \left. +h_\tau\left\|\nabla\left(G_h(\mathrm{curl}\ \boldsymbol{u})-\mathrm{curl}\ \boldsymbol{u}\right)\right\|_{0, \omega_e}\right)\sqrt{s(\boldsymbol{v}_h,  \boldsymbol{v}_h)}
		%		%%%
		%		%&&
		%		{\quad\color{blue}(\mbox{利用迹不等式 })}
		%		%%%
		\\  \label{Eqn:2.4.16}
		&\lesssim&
		h^t\|\boldsymbol{u}\|_{t+1}\sqrt{s(\boldsymbol{v}_h,  \boldsymbol{v}_h)}.
		%		%%%
		%		\\ \nonumber
		%		&&
		%		{\color{blue}(\mbox{Lemma \ref{lem:P_h-Q_h} 分别取 $s=0, 1$, $h_e \leqslant ch_\tau$以及剖分 $\mathcal{T}_h$ 的形状正则性})}
		%		%%%
	\end{eqnarray}
	
	For the second term, combining Cauchy-Schwarz inequality and Lemma \ref{lem:errorofQ_h}, we arrive at
	\begin{eqnarray}\nonumber
		\left(\mathrm{curl}_w Q_h\boldsymbol{u}-\mathrm{curl} \boldsymbol{u}, \mathrm{curl}_w \boldsymbol{v}_h\right)_{\mathcal{T}_h}
		&\leqslant&\left\|\mathrm{curl}_w Q_h\boldsymbol{u}-\mathrm{curl} \boldsymbol{u}\right\|_{\mathcal{T}_h} \left\|\mathrm{curl}_w \boldsymbol{v}_h\right\|_{\mathcal{T}_h} \\ \label{Eqn:2.4.17}
		&\lesssim&
		h^t\|\boldsymbol{u}\|_{t+1}\left\|\mathrm{curl}_w\ \boldsymbol{v}_h\right\|_{\mathcal{T}_h}.
		%		%%%
		%		{\quad\color{blue}(\mbox{利用引理 \ref{lem:errorofQ_h} })}
		%		%%%
	\end{eqnarray}
	
	For the third term, using Cauchy-Schwarz inequality and Lemma \ref{lem:P_h-Q_h} leads to
	\begin{eqnarray}
		%		\nonumber
		\left(P_h\boldsymbol{u}-\boldsymbol{u}, \boldsymbol{v}^0_h\right)_{\mathcal{T}_h}
		%		&\leqslant&
		\lesssim
		\left\|P_h\boldsymbol{u}-\boldsymbol{u}\right\|_{\mathcal{T}_h} \left\|\boldsymbol{v}^0_h\right\|_{\mathcal{T}_h}
		%		%%%
		%		{\quad\color{blue}(\mbox{利用 Cauchy-Schwarz 不等式 })}
		%		%%%
		%		\\
		%		&\lesssim&
		\lesssim
		h^{t+1}\|\boldsymbol{u}\|_{t+1} \left\|\boldsymbol{v}^0_h\right\|_{\mathcal{T}_h}.\label{Eqn:2.4.18}
		%\lesssim h^{t}\|\boldsymbol{u}\|_{t+1} \left\|\boldsymbol{v}^0_h\right\|_{\mathcal{T}_h}.
		%		%%%
		%		{\quad\color{blue}(\mbox{利用 Lemma \ref{lem:P_h-Q_h}})}
		%		%%%
	\end{eqnarray}
	
	For the fourth term,  applying the definition of $s(\cdot,  \cdot)$ in \eqref{Eqn:2-2-3-b}, the fact $\Lbracket \boldsymbol{u}\Rbracket_e=0(\forall e\in \mathcal{E}_h)$ for $\boldsymbol{u}\in  \boldsymbol{H} _0(\mathrm{curl}; \Omega)$,  Cauchy-Schwarz inequality, trace inequality and Lemma \ref{lem:P_h-Q_h}, we have
	\begin{eqnarray}
		\nonumber
		s(Q_h\boldsymbol{u}, \boldsymbol{v}_h)
		&=&\sum\limits_{e\in \mathcal{E}_h}h^{-1}_{e}\left\langle \Lbracket P_h\boldsymbol{u}\Rbracket _e,   \Lbracket \boldsymbol{v}^0_h\Rbracket_e  \right\rangle_e
		%		%%%
		%		{\quad\color{blue}(\mbox{利用 $s(\cdot,  \cdot) $ 的定义式 \eqref{Eqn:2-2-3-b}})}
		%		%%%	
		=
		\sum\limits_{e\in \mathcal{E}_h}h^{-1}_e\left\langle \Lbracket P_h\boldsymbol{u}-\boldsymbol{u}\Rbracket_e, \Lbracket \boldsymbol{v}^0_h\Rbracket_e  \right\rangle_e
		%		%%%
		%		{\quad\color{blue}(\mbox{由于 $\Lbracket \boldsymbol{u}\Rbracket_e=0(e\in \mathcal{E}_h)$})}
		%		%%%
		\\  \nonumber
		&=&
		\sum\limits_{e\in \mathcal{E}_h}\left\langle h^{-1/2}_e  \Lbracket P_h\boldsymbol{u}-\boldsymbol{u}\Rbracket_e,  h^{-1/2}_e \Lbracket \boldsymbol{v}^0_h\Rbracket _e \right\rangle_e
		%		%%%
		%		{\color{blue}(\mbox{将 $h^{-1}_e$ 分为两个 $h^{-1/2}_e$ })}
		%		%%%
		\\  \nonumber
		&\leqslant&
		\left(\sum\limits_{e\in \mathcal{E}_h}\int_e h^{-1}_e \left| \Lbracket P_h\boldsymbol{u}-\boldsymbol{u}\Rbracket _e\right|^2ds\right)^{\frac{1}{2}}
		 \left(\sum\limits_{e\in \mathcal{E}_h}\int_e h^{-1}_e \left|\Lbracket \boldsymbol{v}^0_h\Rbracket _e\right|^2ds\right)^{\frac{1}{2}}  \\  \nonumber
		%		%%%
		%		&&
		%		{\quad\color{blue}(\mbox{利用Cauchy-Schwarz不等式 })}
		%		\\  \nonumber
		%		%%%
		&\lesssim&
		\sum\limits_{e\in \mathcal{E}_h}h^{-\frac{1}{2}}_e \left\|P_h\boldsymbol{u}-\boldsymbol{u}\right\|_{0, e} \sqrt{s(\boldsymbol{v}_h,  \boldsymbol{v}_h)}
		%		%%%
		%		{\quad\color{blue}(\mbox{利用 $s(\boldsymbol{v},  \boldsymbol{v}) $ 的定义式 \eqref{Eqn:2-2-3-b} })}
		%		%%%
		\\  \nonumber
		&\lesssim&
		\sum\limits_{e\in \mathcal{E}_h}h^{-\frac{1}{2}}_e h^{-\frac{1}{2}}_\tau\left(\left\|P_h\boldsymbol{u}-\boldsymbol{u}\right\|_{0, \omega_e}+h_\tau\left\|\nabla\left(P_h\boldsymbol{u}-\boldsymbol{u}\right)\right\|_{0, \omega_e}\right)\sqrt{s(\boldsymbol{v}_h,  \boldsymbol{v}_h)}  \\  \nonumber
		%		%%%
		%		&&
		%		{\quad\color{blue}(\mbox{利用迹不等式  })}
		%		%%%
		\\ \label{Eqn:2.4.15}
		&\lesssim&
		h^t\|\boldsymbol{u}\|_{t+1}\sqrt{s(\boldsymbol{v}_h,  \boldsymbol{v}_h)}.
		%		%%%
		%		\\ \nonumber
		%		&&
		%		{\color{blue}(\mbox{引理\ref{lem:P_h-Q_h} 分别取 $s=0, 1$, $h_e \leqslant ch_\tau$以及剖分 $\mathcal{T}_h$ 的形状正则性})}
		%		%%%
	\end{eqnarray}

	Submitting \eqref{Eqn:2.4.16}-\eqref{Eqn:2.4.15} into \eqref{Eqn:2.4.14}, choosing $\boldsymbol{v}_h=Q_h\boldsymbol{u}-\boldsymbol{u}_h=\left\{P_h\boldsymbol{u}-\boldsymbol{u}_h^0, \Lbrace P_{h} \boldsymbol{u}-\boldsymbol{u}_h^0\Rbrace\right\}$ and using the definition of energy norm \eqref{Eq:H-1}, we have
	\begin{eqnarray*}
		\lefteqn{\interleave Q_h\boldsymbol{u}-\boldsymbol{u}_h\interleave^2 =
			\left(\mathrm{curl}_w\ (Q_h\boldsymbol{u}-\boldsymbol{u}_h), \mathrm{curl}_w\ (Q_h\boldsymbol{u}-\boldsymbol{u}_h)\right)_{\mathcal{T}_h}} 
		\\
		&&
		+\left(P_h\boldsymbol{u}-\boldsymbol{u}_h^0, P_h\boldsymbol{u}-\boldsymbol{u}_h^0\right)_{\mathcal{T}_h} +s(Q_h\boldsymbol{u}-\boldsymbol{u}_h, Q_h\boldsymbol{u}-\boldsymbol{u}_h)  \\
		%		%%%
		%		&&
		%		{\quad\color{blue}(\mbox{能量范数 \eqref{Eq:H-1}})}
		%		\\  \nonumber
		%		%%%
		&\lesssim& h^t\|\boldsymbol{u}\|_{t+1}\sqrt{s (Q_h\boldsymbol{u}-\boldsymbol{u}_h,  Q_h\boldsymbol{u}-\boldsymbol{u}_h)}
		+ h^t\|\boldsymbol{u}\|_{t+1}\left\|\mathrm{curl}_w\ Q_h\boldsymbol{u}
		-\boldsymbol{u}_h\right\|_{\mathcal{T}_h}\\
		&& +h^{t+1}\|\boldsymbol{u}\|_{t+1} \left\|P_hu-u^0_h\right\|_{\mathcal{T}_h}
		+h^t\|\boldsymbol{u}\|_{t+1}\sqrt{s  (Q_h\boldsymbol{u}-\boldsymbol{u}_h, Q_h\boldsymbol{u}-\boldsymbol{u}_h)}   \\
		%		%%%
		%		&&
		%		{\quad\color{blue}(\mbox{将\eqref{Eqn:2.4.16}-\eqref{Eqn:2.4.15} 带入 \eqref{Eqn:2.4.14}})}
		%		\\  \nonumber
		%		%%%
		&\lesssim&
		h^t\|\boldsymbol{u}\|_{t+1}\interleave Q_h\boldsymbol{u}-\boldsymbol{u}_h\interleave,
		%		%%%
		%		{\quad\color{blue}(\mbox{能量范数 \eqref{Eq:H-1}})}
		%		\nonumber
		%		%%%
	\end{eqnarray*}
	in the last inequality, we use the fact $h<1$.
\end{proof}

\begin{theorem}\label{Thm:2.2}
	Assume $\boldsymbol{u}\in \boldsymbol{H} _0(\mathrm{curl}; \Omega)\cap \boldsymbol{H}^{t+1}(\Omega)$ with $1/2\langle t \leqslant k$ and $\boldsymbol{u}_h\in \boldsymbol{V}^0_h$ are the solutions of \eqref{CVP} and \eqref{Eq:MWG}, respectively. If $h<1$, then we have
	\begin{equation}\label{Eqn:2.4.12}
		\interleave \boldsymbol{u}-\boldsymbol{u}_h\interleave\lesssim h^t\left\|\boldsymbol{u}\right\|_{t+1},
	\end{equation}
	where the constant depends on the shape regularity of $\mathcal{T}_h$.
	%and the polynomial degree $k$.
\end{theorem}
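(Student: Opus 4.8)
The plan is to derive the final energy-norm estimate for $\interleave \boldsymbol{u}-\boldsymbol{u}_h\interleave$ by combining the approximation estimate for the projection (Lemma \ref{lem:errorofQ_h}) with the estimate for the projection-to-discrete-solution difference (Lemma \ref{lem:3.3.5}), via a triangle inequality. First I would write
\begin{equation*}
	\interleave \boldsymbol{u}-\boldsymbol{u}_h\interleave
	\leqslant \interleave \boldsymbol{u}-Q_h\boldsymbol{u}\interleave + \interleave Q_h\boldsymbol{u}-\boldsymbol{u}_h\interleave,
\end{equation*}
which is legitimate because both $\boldsymbol{u}$ (as an element of $\boldsymbol{H}^{t+1}(\Omega)$) and $Q_h\boldsymbol{u}, \boldsymbol{u}_h \in V_h$ lie in the space $V_h\cup \boldsymbol{H}^{t+1}(\Omega)$ on which the norm $\interleave\cdot\interleave$ in \eqref{Eq:H-1} is defined, and $\interleave\cdot\interleave$ is genuinely a seminorm/norm satisfying the triangle inequality since it is built from the $L^2$-type quantities $\|\mathrm{curl}_w\,\cdot\|_{\mathcal{T}_h}$, $\|\cdot^0\|_{\mathcal{T}_h}$ and $\sqrt{s(\cdot,\cdot)}$.

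Next I would bound the two pieces. For the first term, Lemma \ref{lem:errorofQ_h} applied with this $\boldsymbol{v}=\boldsymbol{u}$ gives directly $\interleave \boldsymbol{u}-Q_h\boldsymbol{u}\interleave \lesssim h^t\|\boldsymbol{u}\|_{t+1}$, using that $\boldsymbol{u}\in\boldsymbol{H}^{t+1}(\Omega)$ with $1/2<t\leqslant k$ so the hypothesis $0\leqslant t\leqslant k$ is met. For the second term, Lemma \ref{lem:3.3.5} — whose hypotheses ($\boldsymbol{u}\in\boldsymbol{H}_0(\mathrm{curl};\Omega)\cap\boldsymbol{H}^{t+1}(\Omega)$, $1/2<t\leqslant k$, $h<1$) are exactly those assumed in the theorem — gives $\interleave Q_h\boldsymbol{u}-\boldsymbol{u}_h\interleave \lesssim h^t\|\boldsymbol{u}\|_{t+1}$, with the constant depending only on the shape regularity of $\mathcal{T}_h$. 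Adding the two bounds yields $\interleave \boldsymbol{u}-\boldsymbol{u}_h\interleave \lesssim h^t\|\boldsymbol{u}\|_{t+1}$, which is \eqref{Eqn:2.4.12}.

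There is essentially no obstacle here: the real work has already been done in Lemmas \ref{Lem:Q_h}, \ref{lem:errorofQ_h}, \ref{Lem:compatibility} and \ref{lem:3.3.5} (the compatibility/technical equations \eqref{Eq:compatibility-1} and \eqref{Eqn:2.16}, the handling of the lower-order term $(\boldsymbol{u},\boldsymbol{v}^0_h)$, and the absorption argument using $h<1$). The only point that warrants a sentence of care is confirming that $\interleave\cdot\interleave$ obeys the triangle inequality on the combined space $V_h\cup\boldsymbol{H}^{t+1}(\Omega)$ — for this one notes $\interleave\boldsymbol{v}\interleave = \big(\|\mathrm{curl}_w\,\boldsymbol{v}\|_{\mathcal{T}_h}^2+\|\boldsymbol{v}^0\|_{\mathcal{T}_h}^2+s(\boldsymbol{v},\boldsymbol{v})\big)^{1/2}$ is the Euclidean norm of the triple $(\mathrm{curl}_w\,\boldsymbol{v},\ \boldsymbol{v}^0,\ (h_e^{-1/2}\Lbracket\boldsymbol{v}^0\Rbracket_e)_{e})$ in a product $L^2$ space, with $\mathrm{curl}_w$ linear (extended to $\boldsymbol{H}^{t+1}$ by identifying $\mathrm{curl}_w\,\boldsymbol{v}$ with $\mathrm{curl}\,\boldsymbol{v}$ there, consistently with how \eqref{Eq:errorofQ_h1} is used), so the triangle inequality is immediate. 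Hence the proof is a two-line assembly of the preceding lemmas.
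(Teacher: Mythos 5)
Your proof is correct and follows essentially the same route as the paper: the triangle inequality applied to the splitting of $\boldsymbol{u}-\boldsymbol{u}_h$ into $\boldsymbol{u}-Q_h\boldsymbol{u}$ and $Q_h\boldsymbol{u}-\boldsymbol{u}_h$, with the two pieces bounded by Lemma \ref{lem:errorofQ_h} and Lemma \ref{lem:3.3.5}, respectively. You in fact cite the correct lemma for the first piece, whereas the paper's proof nominally invokes Lemma \ref{Lem:compatibility} where Lemma \ref{lem:errorofQ_h} is what is actually being used.
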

\begin{proof}
	Applying definition of energy norm \eqref{Eq:H-1}, triangle inequality, Lemmas \ref{Lem:compatibility} and \ref{lem:3.3.5}, we have
	\begin{eqnarray*}
		\interleave\boldsymbol{u}-\boldsymbol{u}_h\interleave &=&
		\interleave\boldsymbol{u}-Q_h\boldsymbol{u}+Q_h\boldsymbol{u}-\boldsymbol{u}_h\interleave \\
		%%%
		%{\quad\color{blue}(\mbox{加一项减一项 })}
		%%%
		&\leqslant&
		\interleave\boldsymbol{u}-Q_h\boldsymbol{u}\interleave+\interleave Q_h\boldsymbol{u}-\boldsymbol{u}_h\interleave
		% %%
		%{\quad\color{blue}(\mbox{利用三角不等式 })}
		%%%
		\lesssim
		h^t\|\boldsymbol{u}\|_{t+1},
		%%%
		%{\color{blue}(\mbox{利用引理 \ref{lem:3.3.3} 和引理 \ref{lem:3.3.5}})}
		%%%
	\end{eqnarray*}
	which completes the proof.
\end{proof}

\section{Numerical experiment}\label{sec:5}
In this section, we will implement some numerical experiments to verify the convergence rate of  modified weak Galerkin algorithm \eqref{Eq:MWG}.
In all experiments, our computational domain is $\Omega=[0,1]\times [0,1]$.  $\Omega$ is partitioned by a uniform square meshes, then each mesh element is obtained by breaking up every square element into two triangles. Let $h=1/N$ be mesh sizes for different triangular meshes, where $N$ means to divide $x-$ and $y-$ into $N$ uniformly distributed subintervals.

%(N=4,8,16,32,64,128,256)
\begin{example}\label{exa:1}
	In this example, we also choose the linear MWG finite element space 
	\begin{equation}\label{linearspace}
	V_h:=\left\{\boldsymbol{v}_h
	=
	\left\{\boldsymbol{v}^{0}_h, \boldsymbol{v}^{b}_h\right\} :\left.\boldsymbol{v}_h^{0}\right|_{\tau} \in (P_{1}(\tau))^2,\left.\boldsymbol{v}^{b}_h\right|_{e}
	=\Lbrace\boldsymbol{v}^{0}_h\Rbrace_e, e\subset \partial \tau, \tau \in \mathcal{T}_h\right\},
	\end{equation} and the second order MWG finite element space
$$
	V_h=\left\{\boldsymbol{v}_h
	=
	\left\{\boldsymbol{v}^{0}_h, \boldsymbol{v}^{b}_h\right\} :\left.\boldsymbol{v}_h^{0}\right|_{\tau} \in (P_{2}(\tau))^2,\left.\boldsymbol{v}^{b}_h\right|_{e}
	=\Lbrace\boldsymbol{v}^{0}_h\Rbrace_e, e\subset \partial \tau, \tau \in \mathcal{T}_h\right\}.
$$
The true solution is equal to the vector function: $\boldsymbol{u}=(x(1-x)y(1-y),x(1-x)y(1-y))^T$.\end{example}
	We report the errors of Example \ref{exa:1} with the polynomial degree $k=1$ and $k=2$ in Tables \ref{Tab:1} and \ref{Tab:2}, respectively. We can see that $\interleave\boldsymbol{u} -\boldsymbol{u}_h\interleave$ has first order in Table \ref{Tab:1} and has second order in %Table \ref{Tab:2}, which is in support of \eqref{Eqn:2.4.12} in Theorem \ref{Thm:2.2}.　

\begin{minipage}[t]{0.5\linewidth} 
	\begin{table}[H]
		\centering\caption{ Convergence for Example \ref{exa:1} with $k=1$.}\label{Tab:1}
		\vskip 0.1cm
		\begin{tabular}{{p{1cm}p{2cm}p{2cm}}}\hline
			\multirow{2}{*} {$h$}& \multicolumn{2}{c}{$\interleave\boldsymbol{u}-\boldsymbol{u}_{h}\interleave$}
			\\\cline { 2 - 3 }
			& \multicolumn{1}{c}{Error}  & \multicolumn{1}{c}{order}  \\ \hline
			1/4 & \multicolumn{1}{c}{5.98E-02} &  \multicolumn{1}{c}{N/A} \\
			1/8 & \multicolumn{1}{c}{3.59E-02} & \multicolumn{1}{c}{0.73}
			\\
			1/16 & \multicolumn{1}{c}{1.94E-02} & \multicolumn{1}{c}{0.89}
			\\
			1/32 & \multicolumn{1}{c}{1.01E-02} & \multicolumn{1}{c}{0.95}
			\\
			1/64 & \multicolumn{1}{c}{5.12E-03} & \multicolumn{1}{c}{0.97} \\
			1/128 & \multicolumn{1}{c}{2.58E-03} & \multicolumn{1}{c}{0.99} 			\\
			\hline
		\end{tabular}
	\end{table}
\end{minipage}
\begin{minipage}[t]{0.5\linewidth}
	\begin{table}[H]
		\centering\caption{Convergence for Example \ref{exa:1} with $k=2$.}\label{Tab:2}
		\vskip 0.1cm
		\begin{tabular}{{p{1cm}p{2cm}p{2cm}}}\hline
			\multirow{2}{*} {$h$}& \multicolumn{2}{c}{$\interleave\boldsymbol{u}-\boldsymbol{u}_{h}\interleave$}
			\\\cline { 2 - 3 }
			& \multicolumn{1}{c}{Error}  & \multicolumn{1}{c}{order}   \\ \hline
			1/4 & \multicolumn{1}{c}{1.26E-02} &  \multicolumn{1}{c}{N/A} \\
			1/8 & \multicolumn{1}{c}{3.40E-03} & \multicolumn{1}{c}{1.88}
			\\
			1/16 & \multicolumn{1}{c}{8.85E-04} & \multicolumn{1}{c}{1.94}
			\\
			1/32 & \multicolumn{1}{c}{2.26E-04} & \multicolumn{1}{c}{1.97}
			\\
			1/64 & \multicolumn{1}{c}{5.70E-05} & \multicolumn{1}{c}{1.98} \\
			1/128 & \multicolumn{1}{c}{1.43E-05} & \multicolumn{1}{c}{1.99}    \\
			\hline
		\end{tabular}
	\end{table}
\end{minipage}

\begin{example}\label{exa:2}
	In this example, we choose linear MWG finite element space \eqref{linearspace} and another true solution $\boldsymbol{u}=(e^{x-y} xy\left(1-x\right)\left(1-y\right),\sin \left(\pi x\right) \sin \left(\pi y\right))^T$. 
	\end{example}
	We report the errors of Example \ref{exa:2} in Table \ref{Tab:3} by  fixing different mesh sizes $h$ to test the numerical example. And we observe that $\interleave\boldsymbol{u} -\boldsymbol{u}_h\interleave$ is first order. We obtain the same convergent order, which is in support of \eqref{Eqn:2.4.12} in Theorem \ref{Thm:2.2}, just as in Table \ref{Tab:1}.

\begin{table}[H]
		\centering\caption{Convergence for Example \ref{exa:2} with homogeneous boundary condition}\label{Tab:3}.
		\setlength{\tabcolsep}{12mm}{
		\begin{tabular}{{p{1cm}p{2cm}p{2cm}}}\hline
			\multirow{2}{*} {$h$}& \multicolumn{2}{c}{$\interleave\boldsymbol{u}-\boldsymbol{u}_{h}\interleave$}
			\\\cline { 2 - 3 }
			& \multicolumn{1}{c}{Error}  & \multicolumn{1}{c}{order}   \\ \hline
			1/4 & \multicolumn{1}{c}{7.15E-01} &  \multicolumn{1}{c}{N/A}  \\
			1/8  & \multicolumn{1}{c}{4.22E-01}  & \multicolumn{1}{c}{0.76}
			\\
			1/16& \multicolumn{1}{c}{2.26E-01} & \multicolumn{1}{c}{0.90}  \\
			1/32& \multicolumn{1}{c}{1.170E-01} & \multicolumn{1}{c}{0.95}  \\
			1/64& \multicolumn{1}{c}{5.94E-02} & \multicolumn{1}{c}{0.98}  \\
			1/128& \multicolumn{1}{c}{2.99E-02} & \multicolumn{1}{c}{0.98}  \\
			1/256& \multicolumn{1}{c}{1.50E-02} & \multicolumn{1}{c}{0.99}  \\
			\hline
		\end{tabular}}
	\end{table}

\begin{example}\label{exa:3}
	In this example, we also choose the MWG finite element space \eqref{linearspace}
	and the true solution is equal to the vector function: $\boldsymbol{u}=(x^2y^2,x(1-x)y(1-y))^T$.
\end{example}

We report the errors of $\interleave\boldsymbol{u}-\boldsymbol{u}_h\interleave$ in Table \ref{Tab:4} by fixing different mesh sizes $h$ to test the numerical example. And we observe that $\interleave\boldsymbol{u}-\boldsymbol{u}_h\interleave$ is first order, which indicate that the MWG algorithm \eqref{Eq:MWG} also have first order for the non-homogeneous boundary condition problem.

%\begin{minipage}[t]{0.5\linewidth}

%\end{minipage}
%
%\begin{minipage}[t]{0.5\linewidth}
	\begin{table}[H]
		\centering\caption{ Convergence for Example \ref{exa:3}  with non-homogeneous boundary condition}\label{Tab:4}.
		\setlength{\tabcolsep}{12mm}{
			\begin{tabular}{{p{1cm}p{2cm}p{2cm}}}\hline
			\multirow{2}{*} {$h$}& \multicolumn{2}{c}{$\interleave\boldsymbol{u}-\boldsymbol{u}_{h}\interleave$}
			\\\cline { 2 - 3 }
			& \multicolumn{1}{c}{Error}  & \multicolumn{1}{c}{order}  \\ \hline
			1/4 & \multicolumn{1}{c}{2.17E-01} &  \multicolumn{1}{c}{N/A}  \\
			1/8  & \multicolumn{1}{c}{1.30E-01}  & \multicolumn{1}{c}{0.74}
			\\
			1/16& \multicolumn{1}{c}{7.11E-02} & \multicolumn{1}{c}{0.86}  \\
			1/32& \multicolumn{1}{c}{3.73E-02} & \multicolumn{1}{c}{0.93}  \\
			1/64& \multicolumn{1}{c}{1.91E-02} & \multicolumn{1}{c}{0.96}  \\
			1/128& \multicolumn{1}{c}{9.66E-03} & \multicolumn{1}{c}{0.98}  \\
			1/256& \multicolumn{1}{c}{4.86E-03} & \multicolumn{1}{c}{0.99}  \\
			\hline
		\end{tabular}}
	\end{table}
%\end{minipage}

\section*{Acknowledgements}

The authors are supported by National Natural Science Foundation of China (Grant No. 12071160). The second and third authors are supported by The Guangdong Basic and Applied Basic Research Foundation (Grant No. 2019A1515010724), Characteristic Innovation Projects of Guangdong colleges and universities (Grant No. 2018KTSCX044), and The General Project topic of Science and Technology in Guangzhou, China (Grant No. 201904010117). The third author is supported by National Natural Science Foundation of China (Grant No. 12101147).

%\section*{References}
%%\bibliographystyle{plain}
%\bibliographystyle{abbrv}
% \bibliography{mybib}

\end{document}